\newtheorem{theorem}{Theorem}[section]
\newtheorem{corollary}[theorem]{Corollary}
\newtheorem{lemma}[theorem]{Lemma}
\newtheorem{proposition}[theorem]{Proposition}
\newtheorem{question}[theorem]{Question}
\newtheorem{example}[theorem]{Example}
\newenvironment{sketch}{\paragraph{\textit{Sketch of the proof.}}}{\hfill$\square$}
\def\NN{\hbox{\sf I\kern-.13em\hbox{N}}}
\def\RR{\hbox{\sf I\kern-.14em\hbox{R}}}
\def\Cc{\hbox{\sf C\kern -.47em {\raise .48ex \hbox{$\scriptscriptstyle |$}}
   \kern-.5em {\raise .48ex \hbox{$\scriptscriptstyle |$}} }}
\newcommand{\be}{\begin{equation}}
\newcommand{\ee}{\end{equation}}
\DeclareMathOperator{\MOD}{mod}
\DeclareMathOperator{\spam}{span}
\begin{document}

\title[Pairs of semi-commuting matrices]{On the dimension of the algebra generated by two positive semi-commuting matrices}
\author{Marko Kandi\'{c}, Klemen \v Sivic}
\date{\today}

\begin{abstract}
\baselineskip 7.5 mm
Gerstenhaber's theorem states that the dimension of  the unital algebra generated by two commuting $n\times n$ matrices is at most $n$. We study the analog of this question for positive matrices with a positive commutator. We show that the dimension of the unital algebra generated by the matrices is at most $\frac{n(n+1)}{2}$ and that this bound can be attained. We also consider the corresponding question if one of the matrices is a permutation or a companion matrix or both of them are idempotents. In these cases, the upper bound for the dimension can be reduced significantly. In particular, the unital algebra generated by two semi-commuting positive idempotent matrices is at most $9$-dimensional. This upper bound can be attained.
\end{abstract}

\maketitle

\noindent
{\it Math. Subj. Classification (2010)}: 15A27, 15B48, 47B47. \\
{\it Key words}: Gerstenhaber's theorem, Semi-commuting matrices, Ideal-reducibility, Jordan block, Permutation matrix, Companion matrix, Idempotents\\

\baselineskip 6mm

\section {Introduction}

A classical question in linear algebra asks for the upper bound of the dimension of a commutative algebra of $n\times n$ matrices. The basic question was answered by Schur \cite{Schur} showing that the upper bound for the dimension is $\lfloor \frac{n^2}{4}\rfloor +1$. An important version of the above question asks for the upper bound of the dimension of a unital algebra generated by two commuting matrices. Using algebraic geometry, Gerstenhaber proved the following well-known result.

\begin{theorem}[\cite{Gerst}]\label{Gerstenhaber}
If $n\times n$ matrices $A$ and $B$ commute, then  the unital algebra generated by $A$ and $B$ is at most $n$-dimensional.
\end{theorem}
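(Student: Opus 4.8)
The plan is to reproduce Gerstenhaber's geometric argument. First I would reduce to the case of complex matrices: any $\Cc$-linear dependence among real matrices splits, via real and imaginary parts, into two real linear dependences, so the dimension of the real algebra generated by two real matrices is at most that of the complex algebra they generate. Second, the Cayley--Hamilton theorem lets me trim the generating set: when $A$ and $B$ commute, every word in $A$ and $B$ equals some $A^iB^j$, and powers beyond $n-1$ reduce, so the unital algebra $\mathcal{A}(A,B)$ is the linear span of the $n^2$ matrices $A^iB^j$ with $0\le i,j\le n-1$; in particular $d(A,B):=\dim\mathcal{A}(A,B)$ is finite and at most $n^2$.

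Now regard $d$ as a function on the commuting variety $\cC=\{(A,B)\in M_n(\Cc)^2 : AB=BA\}$. Since $d(A,B)$ is the rank of the matrix whose columns are the coordinate vectors of the $A^iB^j$, $0\le i,j\le n-1$, and this matrix has entries polynomial in $(A,B)$, the set $\{d\ge m\}$ is Zariski open for each $m$; hence $d$ is lower semicontinuous and attains its maximum on a nonempty open subset of $\cC$. For the generic behaviour, recall that if $A$ is nonderogatory then its centralizer is exactly $\Cc[A]$, which is $n$-dimensional; as $B$ commutes with $A$ this forces $B\in\Cc[A]$, so $\mathcal{A}(A,B)=\Cc[A]$ and $d(A,B)=n$. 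The set $U$ of pairs $(A,B)\in\cC$ with $A$ nonderogatory is open and nonempty.

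The one genuinely hard ingredient is that $\cC$ is irreducible; this is the theorem of Motzkin--Taussky, also established by Gerstenhaber. Granting it, $U$ and the locus $\{d=\max_{\cC}d\}$ are nonempty open, hence dense, subsets of the irreducible variety $\cC$, so they intersect; at a common point $d$ equals simultaneously $n$ and $\max_{\cC}d$, whence $\max_{\cC}d=n$ and therefore $d(A,B)\le n$ for every commuting pair. Thus the only real obstacle is the irreducibility of the commuting variety; the rest is bookkeeping with Cayley--Hamilton, the semicontinuity of matrix rank, and the structure of centralizers of nonderogatory matrices. An induction on $n$ that peels off a common invariant subspace is tempting as a way to sidestep the irreducibility theorem, but keeping the dimension count under control through the inductive step is delicate, so I would stick with the geometric proof.
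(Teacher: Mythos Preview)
The paper does not supply its own proof of Theorem~\ref{Gerstenhaber}; it is quoted as a classical result of Gerstenhaber, and the surrounding text merely surveys the known approaches (Gerstenhaber's original algebraic-geometric argument, Guralnick's observation that it follows from the Motzkin--Taussky irreducibility of the commuting variety, and the purely linear-algebraic proofs of Barria--Halmos and Laffey--Lazarus). So there is nothing in the paper to compare your argument against.

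That said, your sketch is correct and is precisely the Guralnick route the paper cites. The steps are all sound: the span of $\{A^iB^j:0\le i,j\le n-1\}$ gives a polynomial map $\cC\to M_n(\Cc)^{n^2}$ whose rank is $d(A,B)$, making $\{d\ge m\}$ Zariski open; the locus of pairs with $A$ nonderogatory is a nonempty Zariski-open subset of $\cC$ on which $d=n$ exactly; and irreducibility of $\cC$ forces any two nonempty opens to meet, yielding $\max_{\cC}d=n$. You are right to flag the irreducibility of $\cC$ as the substantive input; without it the argument collapses, and that theorem is of comparable depth to Gerstenhaber's result itself. If you wanted an argument that does not outsource the hard part, the Barria--Halmos or Laffey--Lazarus proofs via a generalized Cayley--Hamilton identity are self-contained alternatives, but what you have written is an accurate account of one standard proof.
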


As pointed out by Guralnick \cite{Guralnick}, the above theorem follows also from the irreducibility of the variety of all pairs of commuting $n\times n$ matrices, a result which was first proved by Motzkin and Taussky \cite{MotTau}. All these proofs of Theorem \ref{Gerstenhaber} use algebraic geometry. Purely linear-algebraic proofs, using generalized Cayley-Hamilton theorem, were provided by Barria and Halmos \cite{BarHal} and by Laffey and Lazarus \cite{LafLar}. We note that Theorem \ref{Gerstenhaber} fails for commutative algebras generated by more than 3 elements \cite{Guralnick}, while the question whether the dimension of an algebra generated by three commuting $n\times n$ matrices is bounded by $n$ is still open. See \cite{HolbrookO'Meara} for a recent approach to this problem, and references therein.

In the case of positive matrices it is natural to consider positivity of the commutator. The study of positive commutator of positive matrices and positive operators on Banach lattices was initiated in \cite{BDFRZ}. Such commutators have interesting properties (see \cite{BDFRZ}, \cite{Gao}, \cite{Roman}). For example, a positive commutator $[A,B]=AB-BA$ of positive matrices $A$ and $B$ is nilpotent and contained in the radical of the (Banach) algebra generated by $A$ and $B$. Furthermore, if one of the matrices $A$ and $B$ is ideal-irreducible, then positivity of their commutator implies that they actually commute. In this paper we connect the study of positive commutators of positive matrices with Gersenhaber's theorem. More precisely, we consider the following question.

\begin{question}\label{vprasanje}
Let $A$ and $B$ be positive matrices with a positive commutator $AB-BA.$ What is the upper bound for the dimension of the unital algebra generated by $A$ and $B$?
\end{question}

We prove that, in general, this dimension is at most $\frac{n(n+1)}{2}$. Then we consider Question \ref{vprasanje} for special types of matrices, where the upper bound can be reduced significantly.

The paper is organized as follows. In Section 2 we gather relevant notation, definitions and properties  needed throughout the text.

In Section 3 we answer the general form of Question \ref{vprasanje}. We show that the upper bound for the dimension in the question is $\frac{n(n+1)}{2}$. Furthermore, if one of the matrices in the question is ideal-irreducible, then the matrices commute, so that  the conclusion is in this case the same as in Gerstenhaber's theorem. We also give an example of two positive semi-commuting matrices where the upper bound $\frac{n(n+1)}{2}$ is attained.

In Section 4 we consider Question \ref{vprasanje} when one of the matrices is a permutation matrix. We prove that in this case the upper bound is again $n$. Along the way we completely describe the vector space  of matrices that intertwine two cycles of possibly different sizes, and we also determine its dimension.

In Section 5 we confine ourselves to the case when one of the matrices is a companion matrix. In this case, the upper bound in Question \ref{vprasanje} depends on the algebraic multiplicity of zero as an eigenvalue of the companion matrix. We also prove that only upper-triangular matrices can semi-commute with a given Jordan block.

In Section 6 we consider Question \ref{vprasanje} for the case of two positive idempotent matrices. When one matrix is assumed to be strictly positive, then the upper bound is $6$, and if we additionally assume that its transpose is strictly positive  as well, then the matrices commute and the upper bound is $4$. In general, we show that in an associative algebra two (not necessarily positive) idempotents $E$ and $F$ satisfying $(EF-FE)^n=0$ generate a unital algebra of dimension at most $4n$. This upper bound can be significantly reduced in the case when $E$ and $F$ are complex idempotent $n\times n$ matrices. Gaines, Laffey and Shapiro \cite{GainesLaffeyShapiro} proved that, in this case, the algebra is at most $2n$-dimensional if $n$ is even, and at most $(2n-1)$-dimensional if $n$ is odd. Moreover, this result holds also without the assumption on nilpotency of the commutator.
In the case of positive semi-commuting idempotent matrices, we show that this bound is actually $9$. The latter result holds also in a more general setting of vector lattices.

\section {Preliminaries}

Most of our results are obtained for matrices but some of them
hold in more general setting of vector lattices. Therefore, we first
provide basic definitions and properties of vector lattices.

An {\it ordered vector space} $L$ is a real vector space equipped with an order relation $\leq$ which is compatible with the vector space structure. If $L$ is a lattice with respect to the ordering $\leq$, then $L$ is said to be a {\it vector lattice} or a {\it Riesz space}.
Vector $x$ is called {\it positive} if $x\geq 0$. The set of all positive vectors is denoted by $L^+$ and is called the {\it positive cone} of $L$. The supremum and the infimum of vectors $x$ and $y$ are denoted by $x\vee y$ and $x\wedge y$, respectively.  The vector $x\vee -x$ is called the {\it modulus} or the {\it absolute value} of $x$, and is denoted by $|x|$.
An important example of a vector lattice is $n$-dimensional real vector space $\mathbb R^n$ equipped with the componentwise ordering. Therefore, if $x=(x_1,\ldots,x_n)$ is an arbitrary vector in $\mathbb R^n$, then $|x|=(|x_1|,\ldots,|x_n|)$. Positive vectors in $\mathbb R^n$ are precisely the ones with nonnegative coordinates. Therefore, they are sometimes called
{\it nonnegative vectors}.
A vector $x\in \mathbb R^n$ with positive coordinates is said to be {\it strictly positive}.

An {\it order ideal} of a vector lattice $L$ is a vector subspace $\mathcal{J}$ of $L$ with the property that conditions $0\leq |y|\leq |x|$ and $x\in \mathcal{J}$ imply $y\in \mathcal{J}$. In $\mathbb R^n$ the ideals are precisely standard subspaces, i.e.,
$\mathcal J$ is an order ideal in $\mathbb R^n$ whenever
$$\mathcal{J}=\{(x_1,\ldots,x_n)\in \mathbb R^n:\, x_i=0 \textrm{ for all } i\in \mathcal I\}$$ for some subset $\mathcal I$ of $\{1,\ldots,n\}.$ If $\mathcal I=\emptyset$, then $\mathcal{J}=\mathbb R^n.$
An order closed ideal is called a {\it band}. The {\it disjoint complement} $\mathcal S^d$ of a nonempty set $\mathcal S$ is defined as
$$\{x\in L:\; |x|\wedge |s|=0\quad \textrm{for all }s\in \mathcal S\}.$$ The set $\mathcal S^d$ is always a band. A band $\mathcal B$ in $L$ is called a {\it projection band} whenever
$L=\mathcal B\oplus \mathcal B^d.$ The latter direct sum is an order direct sum, i.e., if $x=y+z$ is a vector in $L$ for some $y\in \mathcal B$ and $z\in \mathcal B^d$, then $x$ is positive if and only if $y$ and $z$ are positive.
Vector lattice $L$ is said to have the {\it projection property} whenever every band in $L$ is a projection band.
It is well-known that
$\mathbb R^n$ has the projection property and that
every standard subspace of $\mathbb R^n$ is a projection band.

If $\|\cdot\|$ is a norm on $L$, then the pair $(L,\|\cdot\|)$ is called a {\it normed vector lattice} or a {\it normed Riesz space} whenever $0\leq |x|\leq |y|$ implies $\|x\|\leq \|y\|.$
A normed vector lattice that is also a Banach space is called a {\it Banach lattice}. A linear operator $T:L\to L$ is said to be {\it positive} whenever
$T(L^+)\subseteq L^+$.
We will write $T\geq 0$ whenever $T$ is positive.
An operator $T$ is said to be {\it negative} if $-T$ is positive.
If the commutator $AB-BA$ of operators $A$ and $B$
is positive or negative, then $A$ and $B$ are said to
{\it semi-commute}.

If $L=\mathbb R^n$ is equipped with the componentwise ordering, a linear operator $T$ on $L$ is positive if and only if the matrix that represents $T$ with respect to the standard basis has nonnegative entries. In matrix theory  matrices with nonnegative entries are called {\it nonnegative matrices}. However, in this paper we adopt the terminology that is usually used in the theory of vector lattices. Therefore the matrices with nonnegative entries will be called {\it positive matrices}. In the case of real matrices $A$ and $B$ we will write
$A\geq B$ whenever $A-B$ is
positive. It should be noted that $A\geq B$ if and only if
$a_{ij}\geq b_{ij}$ for all $i,j$.

The {\it absolute kernel} $\mathcal N(T)$ of a positive linear operator $T$ on a vector lattice $L$ is defined as the set $\{x\in L:\;T|x|=0\}.$ If $\mathcal N(T)=\{0\}$, then $T$ is said to be {\it strictly positive}. Therefore, a positive operator $T$ is strictly positive if and
only if the kernel $\ker T$ does not contain positive vectors.
A positive operator $T$ is called {\it order continuous} if for every decreasing net $\{x_\alpha\}_\alpha$ with zero as its infimum, the infimum of the net $\{Tx_\alpha\}_\alpha$ is also zero. It is a standard fact that the absolute kernel of an order continuous operator is always a band.

The set of all bounded linear functionals of a Banach lattice $L$ is well-known to be a Banach space. The Banach space $L^*$ can be naturally equipped with the ordering given by $\varphi\leq \psi$ in $L^*$ if and only if $\varphi(x)\leq \psi(x)$ for each $x\in L^+$. The dual Banach space $L^*$ equipped with this ordering becomes a Banach lattice. Every positive operator on a Banach lattice is bounded \cite[Theorem 4.2]{AB06}. For the details about vector and Banach lattices and operators acting on them we refer the reader to \cite{AbAp} and \cite{AB06}.

If not otherwise stated, matrices appearing throughout the
text are complex matrices of
size $n\times n$.
The spectral radius of a positive matrix is always an eigenvalue having a positive eigenvector (\cite[Theorem 8.11]{AbAp}). A matrix $A$ is {\it ideal-reducible} if there exists a nontrivial ideal of $\mathbb R^n$  that is invariant under $A$. Therefore, the matrix $A$  is ideal-reducible if and only if there exists a permutation matrix $P$ such that the matrix $P^TAP$ is of the form
$$
\left[
\begin {array}{cc}
\star & \star\\
0 &\star
\end {array}
\right]
.$$ From here it is not hard to see that $A$ is ideal-reducible if and only if $A^T$ is ideal-reducible. If there exists a permutation matrix $P$ such that $P^TAP$ is upper-triangular, then $A$ is said to be {\it ideal-triangularizable}.
If $A$ is not ideal-reducible, then $A$ is said to be {\it ideal-irreducible}. At this point we would like to stress out that other authors use different terms for ideal-reducibility and ideal-triangularizability. For instance, in \cite{AbAp} and \cite{RadRos}, the terms that authors use are reducible and decomposable, respectively. In \cite{RadRos} the authors also use {\it complete decomposability} instead of ideal-triangularizability.

It is easy to see that ideal-irreducible positive matrices are strictly positive. If $A$ is an ideal-irreducible positive matrix, then by the classical Perron-Frobenius theory (see \cite[Theorem 8.26]{AbAp}) the spectral radius $r(A)$ is positive, and the eigenspace corresponding to $r(A)$ is one-dimensional and  spanned by a strictly positive eigenvector.

By a {\it cycle of order} $n$ we mean a $n\times n$  matrix $C_n$ defined by $C_ne_j=e_{j-1}$ for all $2\leq j\leq n$ and $C_ne_1=e_n$ where $\{e_1,\ldots,e_n\}$ is the set of all standard basis vectors of the space $\mathbb R^n$ $(\mathbb C^n)$. It is not difficult to see that the $n\times n$ cycle $C_n$ is ideal-irreducible. We denote by $J_n$ the nilpotent $n\times n$ Jordan block.

For details not explained throughout the text regarding the lattice structure of $\mathbb R^n$ and positive matrices we refer the reader to \cite{AbAp}.

We conclude this section with the following lemma in which we prove that positive matrices $A$ and $B$ with a positive commutator $AB-BA$ commute whenever at least one of the matrices $A$ and $B$ is ideal-irreducible. The case when $A$ is ideal-irreducible was considered in the proof of \cite[Theorem 2.1]{BDFRZ}. For the sake of completness we provide a complete proof that also covers the case when the matrix $B$ is ideal-irreducible. We would also like to refer the reader to \cite[Theorem 2.2]{BDFRZ} and \cite[Corollary 3.5]{Gao} for the infinite-dimensional extensions of this result.

\begin {lemma}\label{komutiranje}
Let $A$ and $B$ be $n\times n$ matrices with a positive commutator $AB-BA$. If at least one of the matrices $A$ and $B$ is positive and ideal-irreducible, then $AB=BA.$
\end {lemma}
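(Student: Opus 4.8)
The plan is to treat the two cases — $A$ ideal-irreducible and $B$ ideal-irreducible — by reducing both to a statement about the positive commutator $C := AB - BA \geq 0$. Write $r = r(A)$ for the spectral radius. The key structural input is the Perron–Frobenius theory recalled in the preliminaries: if $A$ is positive and ideal-irreducible, then $r > 0$, the eigenspace of $A$ at $r$ is one-dimensional and spanned by a strictly positive eigenvector $x$, and (applying the same fact to $A^T$, which is also ideal-irreducible) the eigenspace of $A^T$ at $r$ is spanned by a strictly positive functional $\varphi$.

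First I would handle the case where $A$ is positive and ideal-irreducible. Apply the functional $\varphi$ on the left and the vector $x$ on the right of $C = AB - BA$: since $\varphi A = r\varphi$ and $Ax = rx$, we get $\varphi C x = \varphi A B x - \varphi B A x = r\,\varphi B x - r\,\varphi B x = 0$. But $C \geq 0$, $\varphi$ is strictly positive, and $x$ is strictly positive, so $\varphi(Cx) = 0$ forces $Cx = 0$, and then (again using strict positivity of $\varphi$ together with $C \geq 0$) one deduces $C = 0$ entry by entry: indeed for each standard basis vector $e_j$ we have $0 \le (Ce_j) \le \lambda_j (Cx)$ for a suitable scalar $\lambda_j > 0$ because $x$ is strictly positive, hence $Ce_j = 0$ for all $j$. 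Thus $C = 0$ and $AB = BA$.

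Next I would handle the case where $B$ is positive and ideal-irreducible. The natural move is to note that $C = AB - BA \geq 0$ means $-(BA - AB) \geq 0$, i.e. the commutator $[B, A] = BA - AB$ is \emph{negative}. So applying the Perron eigenvector $y > 0$ of $B$ (with $By = \rho y$, $\rho = r(B) > 0$) on the right and the Perron functional $\psi > 0$ of $B^T$ (with $\psi B = \rho \psi$) on the left of $BA - AB$ gives $\psi(BA - AB)y = \rho\,\psi A y - \rho\,\psi A y = 0$, and since $BA - AB \leq 0$ while $\psi, y$ are strictly positive, the same squeezing argument as above yields $BA - AB = 0$, hence $AB = BA$. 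The point worth spelling out is that the computation $\psi(Mx) = 0$ with $M$ of fixed sign and $\psi, x$ strictly positive forces $M = 0$; this is the one small lemma-like step that both cases share.

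The main obstacle — really the only subtlety — is making sure the Perron–Frobenius data is available on \emph{both} sides, i.e. that one may use a strictly positive left eigenfunctional as well as a strictly positive right eigenvector. This is exactly why the preliminaries record that $A$ is ideal-irreducible iff $A^T$ is, so that $A^T$ inherits a strictly positive Perron eigenvector, which is the same thing as a strictly positive Perron eigenfunctional for $A$. Once that is in hand, neither case requires anything beyond the two-sided pairing trick and the elementary fact that a matrix of one sign annihilated in the pairing $\psi(\cdot\, x)$ against strictly positive $\psi$ and $x$ must vanish; I do not expect any genuine difficulty, only the bookkeeping of treating the commutator as positive in one case and negative in the other.
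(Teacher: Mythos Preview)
Your proposal is correct and follows essentially the same approach as the paper: both use the Perron pairing $y^T(AB-BA)x=0$ with strictly positive left and right Perron eigenvectors, then squeeze via $|z|\le \lambda x$ to conclude $AB-BA=0$. The only cosmetic difference is in the second case: you repeat the pairing argument directly with $B$'s Perron data applied to the negative matrix $BA-AB$, whereas the paper simply transposes the inequality to $B^TA^T-A^TB^T\geq 0$ and invokes the first case for the ideal-irreducible matrix $B^T$.
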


\begin {proof}
Suppose first that the matrix $A$ is positive and ideal-irreducible. Then  we have $r:=r(A)>0$, and there exist strictly positive vectors $x$ and $y$ such that
$Ax=rx$ and $A^Ty=ry$,  from where the following identity follows $$y^T(AB-BA)x=(A^Ty)^TBx-y^TB(Ax)=ry^TBx-ry^TBx=0.$$
Since $y$ is strictly positive and $AB-BA$ is positive, we first conclude $(AB-BA)x=0$. If $z$ is an arbitrary vector in $\mathbb R^n$, then there exists $\lambda\geq 0$ such that $|z|\leq \lambda x$. From here we obtain
$$|(AB-BA)z|\leq (AB-BA)|z|\leq \lambda (AB-BA)x=0.$$
Since $(AB-BA)z=0$ for each vector $z\in \mathbb{R}^n$, we conclude that $AB=BA.$

If $B$ is ideal-irreducible and positive, then by transposing the inequality $AB-BA\geq 0$ we obtain $B^TA^T-A^TB^T\geq 0$. Since $B^T$ is ideal-irreducible and positive as well, by the first case of the proof we have that $A^T$ and $B^T$ commute. This immediately implies $AB=BA$.
\end {proof}

\section {General case}

In this section we answer Question \ref{vprasanje} with Theorems \ref{dimenzija supercentralizatorja} and \ref{doseg meje}. First we observe that the answer to the special case when one of the matrices is ideal-irreducible follows immediately from Theorem  \ref{Gerstenhaber} and Lemma \ref{komutiranje}.

\begin{corollary}\label{pozitivni Gerstenhaber}
Let $A$ and $B$ be semi-commuting $n\times n$ matrices. If one of the matrices $A$ and $B$ is positive and ideal-irreducible, then the unital algebra generated by $A$ and $B$ is at most $n$-dimensional.
\end{corollary}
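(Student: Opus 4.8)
The plan is to observe that this is an immediate consequence of Lemma~\ref{komutiranje} together with Gerstenhaber's theorem (Theorem~\ref{Gerstenhaber}), once the definition of semi-commuting is unwound. By definition, $A$ and $B$ semi-commute precisely when the commutator $AB-BA$ is positive or negative, so there are two cases to treat.

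First I would handle the case $AB-BA\geq 0$. Here the hypothesis is exactly that of Lemma~\ref{komutiranje}: one of $A$ and $B$ is positive and ideal-irreducible, and $AB-BA\geq 0$. Hence the lemma gives $AB=BA$.

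Next, suppose instead that $AB-BA\leq 0$, equivalently $BA-AB\geq 0$. Now I would simply apply Lemma~\ref{komutiranje} with the roles of $A$ and $B$ interchanged: the commutator of the pair $(B,A)$ is $BA-AB\geq 0$, and the assumption that at least one of $A$, $B$ is positive and ideal-irreducible is symmetric in the two matrices, so it remains valid for the pair $(B,A)$. The lemma then yields $BA=AB$, i.e.\ again $AB=BA$.

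In either case $A$ and $B$ commute, so Theorem~\ref{Gerstenhaber} applies and shows that the unital algebra generated by $A$ and $B$ is at most $n$-dimensional. There is no real obstacle here; the only point requiring care is the two-sided nature of the sign in the definition of semi-commuting, which is dispatched by the symmetry of the hypothesis in Lemma~\ref{komutiranje}.
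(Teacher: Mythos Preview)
Your proof is correct and is exactly the argument the paper has in mind: the corollary is stated as an immediate consequence of Lemma~\ref{komutiranje} and Theorem~\ref{Gerstenhaber}, and your case split on the sign of $AB-BA$ together with the symmetry of the hypothesis in Lemma~\ref{komutiranje} is precisely how one unpacks ``semi-commuting'' to reduce to the commuting case.
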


The conclusion in Corollary \ref{pozitivni Gerstenhaber} is the same as in Gerstenhaber's theorem, since semi-commuting matrices actually commute when one of the matrices is positive and ideal-irreducible.  If neither $A$ nor $B$ is ideal-irreducible, then, in general, $A$ and $B$ do not commute, and the dimension of the algebra generated by $A$ and $B$ can be greater than $n$.
We will see that the answer still depends on the structure of matrices $A$ and $B$. The following theorem considers the most general case. The dimension of the algebra generated by two positive matrices $A$ and $B$ substantially increases if we replace the assumption that $A$ and $B$ commute with the assumption that $A$ and $B$ semi-commute.

\begin{theorem}\label{dimenzija supercentralizatorja}
Let $A$ and $B$ be semi-commuting positive $n\times n$ matrices. Then the unital algebra generated by $A$ and $B$ is at most $\frac{n(n+1)}{2}$-dimensional. 
\end{theorem}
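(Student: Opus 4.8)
The plan is to prove the stronger statement that the unital algebra $\mathcal{A}$ generated by $A$ and $B$ is \emph{triangularizable}: some invertible complex matrix $P$ conjugates $\mathcal{A}$ into the algebra $T_n$ of upper-triangular $n\times n$ matrices. Since $\dim T_n=n+(n-1)+\cdots+1=\frac{n(n+1)}{2}$ and conjugation preserves the dimension of an algebra, the asserted bound is then immediate.

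The key steps, in order, are as follows. First, I would invoke the result recalled in the Introduction (from \cite{BDFRZ}) that a positive commutator of two positive matrices is nilpotent and lies in the Jacobson radical $\mathcal{R}:=\operatorname{rad}(\mathcal{A})$ of the finite-dimensional algebra it generates. Since $A$ and $B$ semi-commute, one of $AB-BA$ and $BA-AB$ is a positive commutator of positive matrices --- of the pair $(A,B)$ or of the pair $(B,A)$, which generate the same algebra $\mathcal{A}$ --- so in either case $AB-BA\in\mathcal{R}$. Second, the images of $A$ and $B$ in $\mathcal{A}/\mathcal{R}$ therefore commute, and since $\mathcal{A}/\mathcal{R}$ is the unital algebra generated by these two commuting images, it is commutative. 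Third, I would apply the standard structural fact that a subalgebra of $M_n(\mathbb{C})$ whose quotient by its radical is commutative is triangularizable; a self-contained argument is a short induction on $n$, using that $W=\{x\in\mathbb{C}^n:\mathcal{R}x=0\}$ is a nonzero $\mathcal{A}$-invariant subspace (as $\mathcal{R}$ is a nilpotent ideal) on which $\mathcal{A}$ acts through the commutative algebra $\mathcal{A}/\mathcal{R}$, hence admits a common eigenvector of $\mathcal{A}$ which one factors out. (Equivalently, every commutator of two elements of $\mathcal{A}$ lies in $\mathcal{R}$ and is thus nilpotent, so $\mathcal{A}$ is triangularizable by the theory in \cite{RadRos}.) Finally, $\dim\mathcal{A}\le\dim T_n=\frac{n(n+1)}{2}$.

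Granted the cited radical property, the argument has no real internal obstacle; the whole difficulty is concentrated in that property, the substantive content being that the two-sided ideal generated by the positive commutator is nilpotent, which is established in \cite{BDFRZ}. Two small points need attention. First, the hypothesis is \emph{semi}-commuting, so $AB-BA$ may be negative; this is handled by passing to the pair $(B,A)$. Second, one should not hope to strengthen "triangularizable" to \emph{ideal}-triangularizable (conjugable into $T_n$ by a \emph{permutation}): for instance $A=B=C_n$, the $n\times n$ cycle, generates an $n$-dimensional algebra with no nontrivial invariant ideal at all, so only the weaker linear triangularizability is available --- but it is precisely what the dimension bound requires.
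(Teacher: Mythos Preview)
Your proof is correct, and it reaches the same conclusion the paper does---namely that $A$ and $B$ are simultaneously triangularizable over $\mathbb{C}$---but by a genuinely different route. The paper works concretely with the order structure: it sets $C=A+B$, takes a maximal chain of $C$-invariant ideals, and uses Lemma~\ref{komutiranje} on each ideal-irreducible diagonal block $C_{ii}$ to force $A_{ii}$ and $B_{ii}$ to commute; simultaneous triangularizability then follows block by block. You instead go straight to the abstract structural fact from \cite{BDFRZ} that $[A,B]\in\operatorname{rad}(\mathcal{A})$, deduce that $\mathcal{A}/\operatorname{rad}(\mathcal{A})$ is commutative, and invoke the standard triangularizability criterion.

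Your argument is shorter and more conceptual, packaging the positivity input entirely into the cited radical result. The paper's argument, on the other hand, is self-contained modulo Lemma~\ref{komutiranje} and Gerstenhaber's theorem (it does not need to quote the radical theorem of \cite{BDFRZ}), and---more importantly---its block decomposition yields the sharper bound recorded immediately after the proof: if $n_1,\ldots,n_k$ are the sizes of the blocks in a maximal chain of ideals invariant under both $A$ and $B$, then $\dim\mathcal{A}\le n+\sum_{i<j}n_in_j$. Your approach does not see this refinement directly, since it discards the ideal structure at the outset.
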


\begin{proof}
Let us denote by $\mathcal A$ the unital algebra generated by $A$ and $B$.
Let us also denote by $C$ the matrix $A+B$, and let $\mathcal C:\,\{0\}=\mathcal{J}_0\subset \mathcal{J}_1\subset  \cdots \subset \mathcal{J}_k=\mathbb R^n$ be a maximal chain of ideals invariant under $C$ for some $1\leq k\leq n$.

If $k=1$, then $C$ is ideal-irreducible. Since $AC-CA=AB-BA$, $A$ and $C$ semi-commute, so that $A$ and $C$ actually commute by Lemma \ref{komutiranje}. Therefore, $A$ and $B$ commute and the dimension of $\mathcal A$ is at most $n$ by Theorem \ref{Gerstenhaber}.

Suppose now that $k\geq 2$. For every $2\leq j\leq k$, let $\mathcal{K}_j$ be the ideal of $\mathbb R^n$ such that $\mathcal{J}_{j-1}\oplus \mathcal{K}_j=\mathcal{J}_j.$ Since every ideal in $\mathcal C$ is also invariant under $A$, with respect to the decomposition
\begin {equation}\label{dekompozicija}
\mathbb R^n=\mathcal{J}_1\oplus \mathcal{K}_2\oplus \cdots\oplus \mathcal{K}_k
\end {equation}
the matrices $A$ and $C$ can be written as
$$A=\left[
\begin {array}{ccccc}
A_{11} & \cdots&\cdots& \cdots& A_{1k}\\
0 & A_{22} &\cdots & \cdots &A_{2k}\\
\vdots &\ddots &\ddots & &\vdots \\
\vdots& &\ddots &\ddots &\vdots \\
0 & \cdots & \cdots & 0  & A_{kk}
\end {array}
\right] \qquad \textrm{and}\qquad
C=\left[
\begin {array}{ccccc}
C_{11} & \cdots&\cdots& \cdots& C_{1k}\\
0 & C_{22} &\cdots & \cdots &C_{2k}\\
\vdots &\ddots &\ddots & &\vdots \\
\vdots& &\ddots &\ddots &\vdots \\
0 & \cdots & \cdots & 0  & C_{kk}
\end {array}
\right]
$$ with positive blocks.
Therefore, with respect to the decomposition (\ref{dekompozicija}) the matrix $B$ can be written as
$$B=C-A=\left[
\begin {array}{ccccc}
B_{11} & \cdots&\cdots& \cdots& B_{1k}\\
0 & B_{22} &\cdots & \cdots &B_{2k}\\
\vdots &\ddots &\ddots & &\vdots \\
\vdots& &\ddots &\ddots &\vdots \\
0 & \cdots & \cdots & 0  & B_{kk}
\end {array}
\right],$$
where $B_{ij}=C_{ij}-A_{ij}\geq 0$ for all $1\leq i\leq j\leq k.$ Furthermore, maximality of the chain $\mathcal C$ implies ideal-irreducibility of  diagonal blocks $C_{ii}$ for all $1\leq i\leq k$, so that by Lemma \ref{komutiranje} the diagonal blocks $A_{ii}$ and $C_{ii}$ commute for all $1\leq i\leq k.$
Since $AC-CA=AB-BA$, we have that $A_{ii}$ and $B_{ii}$ commute for all $1\leq i\leq k.$
 By applying the fact that every commuting family of complex matrices is simultaneously triangularizable, $A$ and $B$ are upper-triangular in some basis of the space $\mathbb C^n$. From this we conclude that the algebra  $\mathcal A$ is at most $\frac{n(n+1)}{2}$-dimensional.
\end {proof}

From Corollary \ref{pozitivni Gerstenhaber} and the proof of Theorem \ref{dimenzija supercentralizatorja} we can derive that if
$$\mathcal C:\qquad \{0\}=\mathcal{J}_0\subset \mathcal{J}_1\subset  \cdots \subset \mathcal{J}_k=\mathbb R^n$$ is a maximal chain
of ideals invariant under both $A$ and $B$, then the dimension of the unital algebra generated by $A$ and $B$ is at most
$$n+\sum_{i=1}^{k-1}\sum_{j=i+1}^k n_i n_j$$ where $n_i=\dim \mathcal{J}_i-\dim \mathcal{J}_{i-1}$ for all $i=1,\ldots,k.$ This bound is smaller than $\frac{n(n+1)}{2}$ if and only if $A+B$ is not ideal-triangularizable, or equivalently, the pair $\{A,B\}$ is not ideal-triangularizable.

In the following theorem we prove that the upper bound from Theorem \ref{dimenzija supercentralizatorja} can be attained.

\begin {theorem}\label{doseg meje}
For every $n\in \mathbb N$ there exists a positive diagonal matrix $B$ such that $J_nB-BJ_n\geq 0$ and the dimension of the unital algebra generated by $J_n$ and $B$ is precisely $\frac{n(n+1)}{2}$.
\end {theorem}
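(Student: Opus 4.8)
The plan is to write down $B$ explicitly and then observe that $B$ by itself already forces all diagonal matrix units into the algebra, after which the powers of $J_n$ supply everything else. Concretely, I would take $B$ to be the diagonal matrix with strictly increasing positive diagonal entries $0<b_1<b_2<\cdots<b_n$ (for definiteness, $b_i=i$). Since the only nonzero entries of $J_n$ sit on the superdiagonal, a one-line computation gives $(J_nB-BJ_n)_{i,i+1}=b_{i+1}-b_i>0$ for $1\le i\le n-1$ while all other entries vanish; hence $J_nB-BJ_n\ge 0$, so $J_n$ and $B$ semi-commute. (With the opposite normalisation of the Jordan block one would take the diagonal of $B$ strictly decreasing instead.)

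Let $\mathcal A$ denote the unital algebra generated by $J_n$ and $B$, and let $\mathcal T$ denote the algebra of all upper-triangular $n\times n$ matrices. Both $J_n$ and $B$ are upper triangular, so $\mathcal A\subseteq\mathcal T$; since $\dim\mathcal T=\frac{n(n+1)}{2}$, it remains to prove the reverse inclusion $\mathcal T\subseteq\mathcal A$. As the matrix units $E_{ij}$ with $1\le i\le j\le n$ form a basis of $\mathcal T$, it suffices to show that each of them lies in $\mathcal A$.

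For the diagonal units I would use that $b_1,\dots,b_n$ are pairwise distinct: the Lagrange interpolation polynomial $p_i(t)=\prod_{k\ne i}\frac{t-b_k}{b_i-b_k}$ satisfies $p_i(b_k)=\delta_{ik}$, so $p_i(B)=E_{ii}\in\mathcal A$. For the off-diagonal units, observe that $J_n^{\,j-i}e_j=e_i$ whenever $i<j$, and therefore $J_n^{\,j-i}E_{jj}=\bigl(J_n^{\,j-i}e_j\bigr)e_j^{T}=e_ie_j^{T}=E_{ij}$, which exhibits $E_{ij}$ as a product of elements of $\mathcal A$. Hence $\mathcal A=\mathcal T$ and $\dim\mathcal A=\frac{n(n+1)}{2}$.

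I do not expect a genuine obstacle here: the construction is essentially forced, since a diagonal matrix with simple spectrum automatically generates all diagonal idempotents, and these idempotents in turn isolate the matrix units hidden in the powers of $J_n$. The only points requiring care are bookkeeping — choosing the monotonicity of the diagonal of $B$ so that $J_nB-BJ_n$ (and not its negative) is positive, and keeping the entries strictly monotone and positive so that $B$ is simultaneously a positive matrix and one with pairwise distinct eigenvalues — together with the two routine identities $p_i(B)=E_{ii}$ and $J_n^{\,j-i}E_{jj}=E_{ij}$.
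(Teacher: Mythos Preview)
Your proof is correct and follows essentially the same approach as the paper: the same choice of $B$ (diagonal with strictly increasing positive entries) and the same Lagrange-interpolation argument to place the diagonal units $E_{ii}$ in $\mathcal A$. The only difference is that for the off-diagonal units you use the one-line identity $J_n^{\,j-i}E_{jj}=E_{ij}$, whereas the paper proceeds by induction on $j-i$ via the commutator identity $\bigl[\sum_{m=1}^{i}E_{m,\,m+j-i-1},\,J_n\bigr]=E_{ij}$; your route is slightly more direct.
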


\begin {proof}
Let $B$ be a diagonal matrix with strictly increasing positive entries on the diagonal and let $\mathcal{A}$ be the unital algebra generated by $J_n$ and $B$. It is clear that $J_n$, $B$ and $J_nB-BJ_n$ are positive. Let us prove by induction on $j-i$ that all the matrices $E_{ij}$ where $i\leq j$ are in $\mathcal A$. Since the diagonal entries of $B$ are distinct, it is a standard result that there exist polynomials $p_{i}$ $(1\leq i\leq n)$ such that $p_i(B)=E_{ii}.$ Consequently, $E_{ii}\in \mathcal{A}$ for each $1\le i\le n$.

Assume now that $j-i>0$ and that $E_{i'j'}\in \mathcal{A}$ for all pairs $(i',j')$ with $i'\le j'$ and $j'-i'<j-i$. A direct computation shows that
\begin {align*}
\left[\sum\limits_{m=1}^iE_{m,m+j-i-1},J_n\right]&
=\left[\sum\limits_{m=1}^ie_me_{m+j-i-1}^T,\sum _{k=1}^{n-1}e_ke_{k+1}^T\right]\\
&=\sum_{m=1}^i e_me_{m+j-i}^T-\sum_{m=2}^ie_{m-1}e_{m+j-i-1}^T=E_{ij}.
\end {align*}
By the assumption each matrix $E_{m,m+j-i-1}$ $(1\le m\le i)$ belongs to $\mathcal{A}$, therefore $E_{ij}\in \mathcal{A}$.
\end{proof}

In Theorem \ref{doseg meje} we have seen that the algebra generated by semi-commuting positive matrices $B$ and $J_n$ is the largest possible if $B$ is taken to have a lot of invariant ideals. We will see in Section \ref{CompanionMatrix} that every positive matrix that semi-commutes with a Jordan block is upper-triangular.
We conclude this section with the following question.

\begin {question}
Given $n\in \mathbb N$ and $n\leq k\leq \frac{n(n+1)}{2}$, do there exist positive $n\times n$ matrices $A$ and $B$ with a positive commutator $AB-BA$ such that the dimension of the unital algebra generated by $A$ and $B$ is precisely $k$?
\end {question}

\section {Cycles and permutations}

In this section we consider Question \ref{vprasanje} when one of the matrices $A$ and $B$ is a permutation matrix.  In the following lemma which follows easily from Corollary \ref{pozitivni Gerstenhaber} we first consider the case when permutation matrices are cycles.

\begin{lemma}\label{cikel}
Let $C_n$ be a cycle of order $n$ and let $A$ be a matrix that semi-commutes with $C_n$. Then the algebra generated by $A$ and $C_n$ is $n$-dimensional.
\end{lemma}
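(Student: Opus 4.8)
The plan is to squeeze the dimension between $n$ and $n$. For the upper bound I would first observe that the cycle $C_n$ is a permutation matrix, hence a positive matrix, and that it is ideal-irreducible, as recorded in Section 2. Since $A$ semi-commutes with $C_n$ and $C_n$ is positive and ideal-irreducible, Corollary \ref{pozitivni Gerstenhaber} applies directly and yields that the unital algebra generated by $A$ and $C_n$ is at most $n$-dimensional. (Equivalently, Lemma \ref{komutiranje} forces $A$ and $C_n$ to commute, after which Theorem \ref{Gerstenhaber} gives the same bound.)

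For the lower bound I would note that the unital algebra generated by $A$ and $C_n$ contains the unital algebra generated by $C_n$ alone, so it suffices to show the latter has dimension $n$, i.e. that the minimal polynomial of $C_n$ has degree $n$. This is immediate from the definition of the cycle: iterating $C_n e_j = e_{j-1}$ on the vector $e_n$ gives $C_n^k e_n = e_{n-k}$ for $0 \le k \le n-1$, so $e_n, C_n e_n, \ldots, C_n^{n-1} e_n$ are precisely $e_n, e_{n-1}, \ldots, e_1$, which are linearly independent. Hence $I, C_n, \ldots, C_n^{n-1}$ are linearly independent matrices and the (commutative) algebra they generate is exactly $n$-dimensional.

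Combining the two bounds shows that the unital algebra generated by $A$ and $C_n$ is precisely $n$-dimensional. There is no serious obstacle in this argument: the content is entirely carried by Corollary \ref{pozitivni Gerstenhaber}, and the only things to check are the trivial facts that a cycle is positive and ideal-irreducible (so that it can play the role of the positive ideal-irreducible matrix in that corollary) and that its minimal polynomial has full degree $n$.
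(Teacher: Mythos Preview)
Your argument is correct and matches the paper's own proof essentially verbatim: the paper invokes Corollary~\ref{pozitivni Gerstenhaber} for the upper bound and cites as well known the linear independence of $I,C_n,\ldots,C_n^{n-1}$ for the lower bound. Your only addition is the explicit verification of that linear independence via $C_n^k e_n=e_{n-k}$, which is fine.
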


\begin{proof}
The lemma follows directly from Corollary \ref{pozitivni Gerstenhaber}, together with the well-known fact that the powers $I,C_n,C_n^2,\ldots ,C_n^{n-1}$ are linearly independent.
\end{proof}

In particular, Lemma \ref{cikel} implies that every $n\times n$ matrix that semi-commutes with the cycle $C_n$ of order $n$ is a polynomial in $C_n.$

We proceed with the case when a given matrix intertwines two cycles of possibly different sizes.

\begin{lemma}\label{intertwiner}
For a $m\times n$ matrix $A$ the following statements hold.
\begin {enumerate}
\item [(a)] $C_mA=AC_n$ if and only if $a_{i,j-1}=a_{i+1,j}$ for all $1\le i\le m$ and $1\le j\le n$ where the indices $i+1$ and $j-1$ are taken modulo $m$ and $n$, respectively.
\item [(b)] If $C_mA\ge AC_n$, then $a_{i,j-1}=a_{i+1,j}$ for all $1\le i\le m$ and $1\le j\le n$ where the indices $i+1$ and $j-1$ are taken modulo $m$ and $n$, respectively. In particular, $C_mA=AC_n$.
    \end {enumerate}
\end{lemma}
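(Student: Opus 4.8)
The plan is to prove (a) first by a direct entry-wise computation, and then obtain (b) by combining the inequality $C_mA \ge AC_n$ with a ``trace''-type cancellation argument along each cycle, forcing equality entry by entry.

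For part (a), I would simply compute the $(i,j)$ entries of both $C_mA$ and $AC_n$ using the defining relations $C_m e_\ell = e_{\ell-1}$ (indices mod $m$) and the transpose action $e_\ell^T C_n = e_{\ell+1}^T$ (indices mod $n$). Writing $A = \sum_{i,j} a_{ij} e_i e_j^T$, one gets $C_m A = \sum_{i,j} a_{ij} e_{i-1} e_j^T$ and $A C_n = \sum_{i,j} a_{ij} e_i e_{j+1}^T$; reindexing shows $(C_m A)_{ij} = a_{i+1,j}$ and $(A C_n)_{ij} = a_{i,j-1}$, with the obvious cyclic conventions. Equating these for all $i,j$ gives the stated system $a_{i,j-1} = a_{i+1,j}$, and conversely. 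This step is routine bookkeeping.

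For part (b), the inequality $C_m A \ge A C_n$ says $a_{i+1,j} \ge a_{i,j-1}$ for all $i,j$ (cyclically). The key observation is that iterating this chain of inequalities around the cycles must close up: starting from a fixed entry $a_{i,j}$ and repeatedly applying the inequality, after $\mathrm{lcm}(m,n)$ steps (or more simply, after $mn$ steps, since shifting the first index by $m$ and the second by $n$ returns to the start) one returns to the same entry $a_{i,j}$. A chain of inequalities $a_{i,j} \le a_{i',j'} \le \cdots \le a_{i,j}$ that begins and ends at the same number forces all intermediate inequalities to be equalities. Hence $a_{i+1,j} = a_{i,j-1}$ for all $i,j$, which by part (a) is exactly $C_m A = A C_n$. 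I should be a little careful to check that the orbit structure of the map $(i,j) \mapsto (i+1, j-1)$ on $\mathbb{Z}_m \times \mathbb{Z}_n$ really does revisit every starting point, but since this map is a bijection of a finite set, each orbit is a cycle and the closing-up argument applies on each orbit separately.

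The main obstacle, such as it is, is purely notational: keeping the modular conventions on the two different index sets straight, and making the ``walk around the cycle returns home'' argument precise without drowning in subscripts. There is no deep idea needed beyond the remark that a cyclic chain of $\le$'s collapses to equalities; the work is in presenting the index juggling cleanly. One clean way to phrase it: define $b_{i,j} = a_{i+1,j} - a_{i,j-1} \ge 0$ and sum $b_{i,j}$ over any orbit of $(i,j)\mapsto(i+1,j-1)$; the sum telescopes to $0$, and since each term is nonnegative, each term vanishes.
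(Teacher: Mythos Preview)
Your proposal is correct and follows essentially the same route as the paper: a direct entry computation for (a), and for (b) the observation that the cyclic chain of inequalities along each orbit closes up and hence collapses to equalities. One small slip to fix when you write it up: the orbit map should be $(i,j)\mapsto(i+1,j+1)$ rather than $(i,j)\mapsto(i+1,j-1)$, since $a_{i,j-1}\le a_{i+1,j}$ compares positions differing by $(+1,+1)$; with this correction your telescoping sum over an orbit genuinely cancels.
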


In other words, $A$ is a Toeplitz matrix such that for each $i$ the diagonals containing $a_{mi}$ and $a_{1,i+1}$ (respectively $a_{in}$ and $a_{i+1,1}$) are the same.

\begin {proof}
The proof of (a) can be given by a direct calculation. To see (b), note first that the inequality $C_mA\ge AC_n$ is equivalent to $a_{i+1,j}\ge a_{i,j-1}$ for all $1\le i\le m$ and $1\le j\le n$ where the indices are taken modulo $m$ and $n$. However, if we are moving along the diagonals, we come to the beginning after finitely many steps. Therefore the above inequalities are actually equalities. Moreover, by applying (a) we obtain $C_mA=AC_n.$
\end {proof}

\begin{corollary}\label{dim_intertwiner}
The dimension of the space of all $m\times n$ matrices $A$ satisfying $C_mA=AC_n$ is equal to the greatest common divisor of $m$ and $n$.
\end{corollary}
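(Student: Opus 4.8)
The plan is to read this off directly from Lemma \ref{intertwiner}(a), which says that $C_mA=AC_n$ is equivalent to the system of relations $a_{i,j-1}=a_{i+1,j}$ with the index $i+1$ taken modulo $m$ and the index $j-1$ modulo $n$. First I would reformulate this system as a statement about an equivalence relation on the index set $\mathbb{Z}/m\mathbb{Z}\times\mathbb{Z}/n\mathbb{Z}$: after the harmless reindexing $j'=j-1$ the relations read $a_{i,j'}=a_{i+1,j'+1}$, so they are exactly generated by the single shift $(i,j)\sim(i+1,j+1)$, where the first coordinate is reduced modulo $m$ and the second modulo $n$. By the lemma, a matrix $A$ satisfies $C_mA=AC_n$ if and only if the array $(i,j)\mapsto a_{ij}$ is constant on each equivalence class. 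Since each index pair belongs to exactly one class, assigning an arbitrary scalar to every class and extending by constancy gives a linear isomorphism between the solution space and the space of scalar-valued functions on the set of classes; hence the dimension we want is precisely the number of equivalence classes.

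The second step is to count these classes. The class of $(i,j)$ is its orbit under the action $t\cdot(i,j)=(i+t,j+t)$ of $\mathbb{Z}$, and $t\cdot(i,j)=(i,j)$ holds exactly when $m\mid t$ and $n\mid t$, i.e.\ when $\mathrm{lcm}(m,n)\mid t$. Thus the stabilizer of every point is $\mathrm{lcm}(m,n)\mathbb{Z}$, so every orbit has exactly $\mathrm{lcm}(m,n)$ elements. Since the orbits partition a set of cardinality $mn$, their number is $mn/\mathrm{lcm}(m,n)=\gcd(m,n)$, which is the asserted dimension.

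I do not expect a genuine obstacle here; the only points needing a little care are the bookkeeping with the two different moduli and the verification that the constraints of Lemma \ref{intertwiner}(a) impose no relations beyond "constant along these wrapped diagonals" — but that is immediate, since the relation is generated by one shift and each entry lies in a single orbit. As an alternative to the orbit count one could observe directly that two positions $(i_1,j_1)$ and $(i_2,j_2)$ lie on the same wrapped diagonal iff $j_1-i_1\equiv j_2-i_2\pmod{\gcd(m,n)}$, so the diagonals are indexed by $j-i$ modulo $\gcd(m,n)$; but the orbit formulation makes the final count the most transparent.
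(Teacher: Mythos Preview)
Your argument is correct and is essentially the same as the paper's sketch: both identify the intertwining condition from Lemma~\ref{intertwiner}(a) with constancy along the orbits of the diagonal shift $(i,j)\mapsto(i+1,j+1)$ on $\mathbb{Z}/m\mathbb{Z}\times\mathbb{Z}/n\mathbb{Z}$ and then count these orbits. The only cosmetic difference is that the paper writes down explicit basis matrices $u_j=\sum_{x=1}^{v}E_{1+x(\MOD m),\,j+x(\MOD n)}$ (one per orbit, with $v=\mathrm{lcm}(m,n)$), whereas you obtain the count abstractly via orbit--stabilizer; the content is the same.
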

\begin{sketch}
It follows from Lemma \ref{intertwiner} that $a_{ij}=a_{i+x(\MOD m), j+x(\MOD n)}$ for each $1\le i\le m$, each $1\le j\le n$ and each integer $x$. Let us denote by $d$ and $v$ the greatest common divisor and the least common multiple of $m$ and $n$, respectively. For $j=1,\ldots,d$ we define $u_j=\sum _{x=1}^vE_{1+x(\MOD m),j+x(\MOD n)}$. Then $\{u_1,\ldots ,u_d\}$ is a basis of the vector space of all intertwiners of $C_m$ and $C_n$.
\end{sketch}

\begin{example}
The matrix $A$ satisfying $C_4A=AC_6$ looks like
$$A=\left[
\begin{array}{cccccc}
a&b&a&b&a&b\\b&a&b&a&b&a\\a&b&a&b&a&b\\b&a&b&a&b&a
\end{array}
\right].$$
\end{example}

In the following theorem we consider general permutation matrices.
\begin{theorem}
Let $P$ be a permutation matrix and let $A$ be a matrix such that $A$ and $P$ semi-commute. Then $A$ and $P$ commute and the algebra generated by $A$ and $P$ is at most $n$-dimensional.
\end{theorem}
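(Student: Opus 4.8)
The plan is to split $P$ into a direct sum of cycles, to use the rectangular intertwiner lemma (Lemma~\ref{intertwiner}(b)) to upgrade semi-commutation to genuine commutation, and then to conclude with Gerstenhaber's theorem.

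First I would carry out two harmless normalizations. Replacing $A$ by $-A$ leaves unchanged the semi-commutation hypothesis (it merely interchanges $AP-PA\ge 0$ with $AP-PA\le 0$), the property that $A$ and $P$ commute, and the unital algebra generated by $A$ and $P$; so we may assume $AP-PA\le 0$, that is, $PA-AP\ge 0$. Next, conjugating by a permutation matrix --- an operation that preserves the entrywise order, the commutation relation, and the dimension of the generated algebra --- and using the decomposition of the underlying permutation into disjoint cycles, we may assume that $P=C_{n_1}\oplus\cdots\oplus C_{n_r}$ with $n_1+\cdots+n_r=n$.

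Next I would write $A=[A_{ij}]$ as a block matrix conformally with the decomposition $\mathbb C^n=\mathbb C^{n_1}\oplus\cdots\oplus\mathbb C^{n_r}$, so that $A_{ij}$ is an $n_i\times n_j$ matrix. Since $P$ is block-diagonal, a direct computation gives $(PA-AP)_{ij}=C_{n_i}A_{ij}-A_{ij}C_{n_j}$, and therefore the hypothesis $PA-AP\ge 0$ says exactly that $C_{n_i}A_{ij}\ge A_{ij}C_{n_j}$ for all $i$ and $j$. Splitting each $A_{ij}$ into real and imaginary parts --- the imaginary part then automatically intertwines $C_{n_i}$ and $C_{n_j}$, while the real part still satisfies this inequality --- and applying Lemma~\ref{intertwiner}(b) to the real part, we obtain $C_{n_i}A_{ij}=A_{ij}C_{n_j}$ for every pair $i,j$. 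Hence every block of $PA-AP$ vanishes, so $AP=PA$. Once $A$ and $P$ commute, Theorem~\ref{Gerstenhaber} immediately gives that the unital algebra generated by $A$ and $P$ is at most $n$-dimensional.

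The only step with any real content is the appeal to Lemma~\ref{intertwiner}(b): it is precisely the $m\times n$ (rectangular) generality of that lemma that is needed, since for $i\ne j$ the off-diagonal block $A_{ij}$ intertwines cycles $C_{n_i}$ and $C_{n_j}$ of possibly different sizes. I do not anticipate a genuine obstacle: the sign normalization, the reduction to a direct sum of cycles, the real/imaginary splitting, and the final invocation of Gerstenhaber are all routine bookkeeping.
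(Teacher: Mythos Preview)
Your proposal is correct and follows essentially the same route as the paper: conjugate $P$ to a direct sum of cycles, read off the blockwise inequalities $C_{n_i}A_{ij}\ge A_{ij}C_{n_j}$, invoke Lemma~\ref{intertwiner}(b) on each block to force equality, and finish with Gerstenhaber. The only cosmetic differences are that the paper handles the other sign of the commutator by transposing (using that $P^T$ is again a permutation matrix) rather than by replacing $A$ with $-A$, and that it does not spell out the real/imaginary splitting you include --- your extra care there is harmless and arguably cleaner.
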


\begin{proof}
Suppose first that $PA\geq AP$. Then there exists a permutation matrix $Q$ such that
$$\widetilde P:=Q^TPQ=\left[
\begin{array}{ccc}
C_{n_1}\\
&\ddots\\
&&C_{n_k}
\end{array}
\right]$$
for some cycles $C_{n_1},\ldots ,C_{n_k}$, where $n=n_1+\cdots +n_k$. Since the  matrix
$$\widetilde{A}:=Q^TAQ=\left[
\begin{array}{ccc}
\widetilde{A_{11}}&\cdots&\widetilde{A_{1k}}\\
\vdots&\ddots&\vdots\\
\widetilde{A_{k1}}&\cdots&\widetilde{A_{kk}}
\end{array}
\right]$$ satisfies
$$\widetilde P  \widetilde{A}=Q^TPAQ\geq Q^TAPQ=\widetilde A\widetilde P,$$
we have
$C_{n_i}\widetilde{A_{ij}}\geq \widetilde{A_{ij}}C_{n_j}$ for all $1\le i,j\le k$. Applying Lemma \ref{intertwiner} we get $C_{n_i}\widetilde{A_{ij}}=\widetilde{A_{ij}}C_{n_j}$ for all $1\leq i,j\leq k$. This implies that $\widetilde A$ and $\widetilde P$ commute, so that $A$ and $P$ commute as well. If $AP\geq PA$, then $P^TA^T\geq A^TP^T$, so that by the first case $A^T$ commutes with $P^T$. This immediately implies that $A$ and $P$ commute.

Since in both cases the matrices $A$ and $P$ commute, the unital algebra generated by them is at most $n$-dimensional by Theorem \ref{Gerstenhaber}.
\end{proof}

The above proposition does not hold for generalized permutations. Indeed, as we can see from Theorem \ref{doseg meje}, the upper bound for the dimension of the algebra is $\frac{n(n+1)}{2}$.

\section {Companion matrices and Jordan blocks}\label{CompanionMatrix}

In this section we consider Question \ref{vprasanje} in the case when one of the matrices is a  {\it companion matrix}. By a companion matrix we mean a matrix $A$ of the form
$$A=\left[
\begin {array}{ccccc}
0 & 1 & & & \\
0 & 0 & 1 & &\\
\vdots & \vdots &\ddots &\ddots & \\
0 & 0 & \hdots & 0& 1\\
a_0 & a_1 & a_2 & \hdots & a_{n-1}
\end {array}
\right].$$ The minimal polynomial $m$ of the companion matrix $A$ is given by
$m(\lambda)=\lambda^n-a_{n-1}\lambda^{n-1}-\cdots-a_1\lambda-a_0.$ Recall that the algebraic multiplicity of an eigenvalue $\lambda_0$ of a given matrix is equal to the degree of $\lambda_0$ as a zero of its characteristic polynomial. In the case of a companion matrix $A$ zero is an eigenvalue of $A$ with algebraic multiplicity $k$ if and only if $a_0=\cdots=a_{k-1}=0$ and $a_k\neq 0.$

\begin {lemma}\label{unicimo b}
Suppose that positive $n\times n$ matrices $A$ and $B$ are
of the form
$$
A=\left[\begin {array}{cc}
0 & e_1^T\\
0 & A'
\end {array}
\right] \qquad \textrm{and} \qquad
B=\left[\begin {array}{cc}
\beta & b^T\\
b' & B'
\end {array}
\right]
$$ where $e_1,b,b'\in \mathbb R^{n-1}$ and $A'$ is a companion matrix of size $(n-1)\times (n-1)$. If $A$ and $B$ semi-commute, then $b'=0$. In particular, $A$ and $B$ are simultaneously ideal-reducible.
\end {lemma}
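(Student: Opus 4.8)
The plan is to compute the commutator $AB-BA$ block-wise with respect to the decomposition $\mathbb{R}^n = \mathbb{R} \oplus \mathbb{R}^{n-1}$ and to extract enough information from the sign of its $(2,1)$-block (the lower-left block in $\mathbb{R}^{n-1}$) to force $b' = 0$. Writing $A = \begin{bmatrix} 0 & e_1^T \\ 0 & A' \end{bmatrix}$ and $B = \begin{bmatrix} \beta & b^T \\ b' & B' \end{bmatrix}$, a direct multiplication gives
\[
AB = \begin{bmatrix} e_1^T b' & e_1^T B' \\ A' b' & A' B' \end{bmatrix}, \qquad
BA = \begin{bmatrix} 0 & \beta e_1^T + b^T A' \\ 0 & b' e_1^T + B' A' \end{bmatrix},
\]
so the $(2,1)$-block of $AB - BA$ is exactly $A' b'$, and the $(2,2)$-block is $A'B' - B'A' - b' e_1^T$. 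The key observation is that the $(2,1)$-block of the commutator is the single column vector $A' b' \in \mathbb{R}^{n-1}$; if $A$ and $B$ semi-commute, this vector is either $\geq 0$ or $\leq 0$. Since $b' \geq 0$ is given, I want to show that a nonzero $b'$ is incompatible with this sign constraint once one also uses the structure of $A'$ as a companion matrix.

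The main step is to exploit the companion structure of $A'$. If $A'$ is the $(n-1)\times(n-1)$ companion matrix displayed in the section, then for a column vector $b' = (b'_1, \ldots, b'_{n-1})^T \geq 0$ one computes $A' b' = (b'_2, b'_3, \ldots, b'_{n-1}, \sum_{j} a_{j-1} b'_j)^T$ — that is, the first $n-2$ entries of $A'b'$ are $b'_2, \ldots, b'_{n-1} \geq 0$. So if $AB - BA \leq 0$, then already $b'_2 = \cdots = b'_{n-1} = 0$, and then $A'b' = (0,\ldots,0, a_0 b'_1)^T$; since $A' \geq 0$ and $a_0 \geq 0$, the last entry is $\geq 0$, and being also $\leq 0$ it vanishes, forcing $a_0 b'_1 = 0$. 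One then needs a separate small argument (using that $A'$ is a companion matrix and $AB-BA$ is also required to be positive in the case $AB-BA \geq 0$, or iterating the relation) to also kill $b'_1$. In the case $AB - BA \geq 0$ the inequality $A'b' \geq 0$ is automatic and gives nothing directly, so here I expect one must instead look at the $(1,1)$-entry $e_1^Tb' = b'_1$ together with iterates: one considers the commutators of $B$ with powers $A^k$, or uses the fact that the absolute kernel / invariant-ideal structure forces constraints, or — more likely — reduces to the $AB-BA \le 0$ case by a transposition or a sign trick.

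The anticipated main obstacle is therefore handling the case $AB - BA \geq 0$, where the sign of $A'b'$ gives no immediate cancellation. I would resolve this by a bootstrapping argument: from $AB - BA \geq 0$ and positivity of all entries, examine the $(1,1)$-entry, which equals $b'_1 \geq 0$ with no constraint, but then pass to $A^2 B - B A^2 = A(AB-BA) + (AB-BA)A$, which is again positive, and whose lower-left block involves $(A')^2 b'$ plus a correction term; pushing this far enough (up to $A^{n-1}$, using that $(A')^{n-1}, \ldots, I$ span via the companion relation) should pin down $b'$ as lying in an $A'$-invariant cone that, combined with $A'$ being a single companion block with all the entries nonnegative, forces $b' = 0$. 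Once $b' = 0$, the span $\{0\} \oplus \mathbb{R}^{n-1}$ is an ideal of $\mathbb{R}^n$ invariant under both $A$ and $B$, which is the asserted simultaneous ideal-reducibility; this finishes the proof.
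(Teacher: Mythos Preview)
Your block computation of $AB-BA$ is correct, and your treatment of the case $BA\ge AB$ is essentially the paper's: from $A'b'\le 0$ you get $b'_2=\cdots=b'_{n-1}=0$, and the ``separate small argument'' you allude to for $b'_1$ is simply the $(1,1)$-entry $e_1^Tb'=b'_1\le 0$, which you computed but did not use.

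The genuine gap is the case $AB\ge BA$. Your bootstrapping via $A^kB-BA^k$ does not give new information: since $A^k=\left[\begin{smallmatrix}0&e_1^T(A')^{k-1}\\0&(A')^k\end{smallmatrix}\right]$, the $(1,1)$-entry of $A^kB-BA^k$ is $e_1^T(A')^{k-1}b'=b'_k$ and the $(2,1)$-block is $(A')^kb'$, both of which are automatically $\ge 0$ already from $b'\ge 0$ and $A'\ge 0$. So positivity of these higher commutators imposes no constraint, and the sketch does not close.

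The paper's argument for this case is different and uses the $(2,2)$-block, which you did not exploit. From $A'B'-B'A'\ge b'e_1^T\ge 0$ and the fact (Theorem 2.1 of \cite{BDFRZ}) that a positive commutator of positive matrices lies in the radical of the algebra they generate, one gets that $(A'B'-B'A')(A')^k$ is nilpotent for every $k\ge 0$; hence the rank-one positive matrix $b'e_1^T(A')^k\le (A'B'-B'A')(A')^k$ is nilpotent too, so its square vanishes, giving $e_1^T(A')^kb'=0$. Since $e_1^T(A')^k=e_{k+1}^T$ for $0\le k\le n-2$, this kills every coordinate of $b'$. This nilpotency-in-the-radical step is the missing idea in your proposal.
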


\begin {proof}
A direct calculation shows that
$$AB-BA=\left[\begin {array}{cc}
e_1^Tb' & e_1^TB'-\beta e_1^T-b^TA'\\
A'b' & A'B'-B'A'-b'e_1^T
\end {array}
\right].$$

Suppose first that $AB\geq BA$, so that
$A'B'-B'A'\geq b'e_1^T\geq 0.$
Since the matrices $A$ and $B$ are positive, matrices $A'$ and $B'$ are positive as well. By \cite[Theorem 2.1]{BDFRZ} the commutator $A'B'-B'A'$ belongs to the radical of the Banach algebra generated by $A'$ and $B'$, from where it follows that the matrix $(A'B'-B'A')(A')^k$ is  nilpotent for every $k\in \mathbb N_0.$ Since the product of positive matrices is positive, the inequality
$$(A'B'-B'A')(A')^k\geq b'e_1^T(A')^k\geq 0$$ implies that
the matrix $C:=b'e_1^T(A')^k$ is also nilpotent for every $k\in \mathbb N_0$. Since $C$ is of rank one, we have $C^2=0$, so that $e_1^T(A')^kb'=0.$
The fact that $A'$ is a $(n-1)\times (n-1)$ companion matrix gives us $e_1^T(A')^k=e_{k+1}^T$ for all $k=0,\ldots,n-2$, so that
$$e_{k+1}^Tb'=(e_1^T(A')^k)b'=0 \qquad \qquad (0\leq k\leq n-2)$$ implies $b'=0$.

Assume now that $BA\geq AB$. Then the inequalities $e_1^Tb'\leq 0$ and $A'b'\leq 0$ imply $e_1^Tb'=0$ and $A'b'=0$, respectively. If $(b')^T=[b_1',\cdots,b_{n-1}']$, then $e_1^Tb'=0$ implies $b_1'=0$, and the equality $A'b'=[b_2', \cdots,b_{n-1}',\star]^T=[0,\ldots,0]^T$ implies $b_2'=\cdots=b_{n-1}'=0$. In this case again we obtain $b'=0.$
\end {proof}

\begin {proposition}\label{Strukturni}
Let $A$ and $B$ be positive semi-commuting $n\times n$ matrices where $A$ is a companion matrix. If  zero is an eigenvalue of $A$  with algebraic multiplicity $k$ for some $1\leq k\leq n$, then the leading principal submatrix $B_1$ of order $k$ of the matrix $B$ is upper-triangular. Furthermore, if $k<n$, then $B$ is  of the form
$$\left[
\begin{array}{cc}
B_1 & B_2\\
\mathbf{0} & B_3
\end{array}\right]
.$$
\end {proposition}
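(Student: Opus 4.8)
The plan is to iterate Lemma \ref{unicimo b} $k$ times, peeling off one row and column at each step. Since $A$ is a companion matrix with zero as an eigenvalue of algebraic multiplicity $k$, we have $a_0=\cdots=a_{k-1}=0$, so $A$ has the block form $\left[\begin{smallmatrix}0 & e_1^T\\ 0 & A'\end{smallmatrix}\right]$ where $A'$ is again a companion matrix of size $(n-1)\times(n-1)$, and zero is an eigenvalue of $A'$ of algebraic multiplicity $k-1$ (the coefficients of $A'$ are $a_1,\ldots,a_{n-1}$). Writing $B$ in the matching $1+(n-1)$ block form, Lemma \ref{unicimo b} applies directly: the $(n-1)\times 1$ block $b'$ below the corner vanishes, so $A$ and $B$ are simultaneously in block upper-triangular form with first diagonal block the $1\times 1$ scalar $\beta$ and second diagonal blocks $A'$ and $B'$.

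Next I would check that the hypotheses are inherited by the pair $(A',B')$. Both are positive (they are principal submatrices of positive matrices). They semi-commute: from the displayed formula for $AB-BA$ in the proof of Lemma \ref{unicimo b}, once $b'=0$ the lower-right block of the commutator is exactly $A'B'-B'A'$, which therefore has the same sign as $AB-BA$. And, as noted above, $A'$ is a companion matrix with zero as an eigenvalue of algebraic multiplicity $k-1$. If $k-1\ge 1$, we repeat the argument with $(A',B')$ in place of $(A,B)$. After $k$ applications we conclude that $B$, in the original standard basis, is block upper-triangular with respect to the ideal chain $\{0\}\subset\mathbb R e_1\subset\mathbb R^2\subset\cdots\subset\mathbb R^k\subset\mathbb R^n$ (here identifying $\mathbb R^j$ with the span of the first $j$ standard basis vectors). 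Block upper-triangularity with respect to this maximal chain of ideals among the first $k$ coordinates means precisely that the leading principal $k\times k$ submatrix $B_1$ of $B$ is upper-triangular, and moreover that there is no nonzero entry of $B$ lying in rows $k+1,\ldots,n$ and columns $1,\ldots,k$; that is, $B=\left[\begin{smallmatrix}B_1 & B_2\\ \mathbf 0 & B_3\end{smallmatrix}\right]$, which is the second assertion (valid when $k<n$).

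The only point requiring care—the main obstacle—is the bookkeeping of the induction: at the $j$-th stage one must verify that the reduction produced by Lemma \ref{unicimo b} at the previous stage leaves the pair genuinely in the hypothesis of the lemma again, i.e.\ that after deleting the first row and column the new top-left $(n-j)$-matrix $A^{(j)}$ is still a companion matrix whose first column is $e_1$ (true because $a_{j-1}=0$ for $j\le k$, which is exactly the algebraic-multiplicity hypothesis), and that the commutator sign is preserved (true by the block formula, as the vanished block $b'$ kills the cross terms). There is no real computation beyond what is already in Lemma \ref{unicimo b}; one just has to track the indices and observe that the chain of ideals invariant under the successive reductions assembles, in the original coordinates, into the stated block form for $B$.
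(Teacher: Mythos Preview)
Your argument is correct and is essentially the same as the paper's: both proceed by induction on $k$, applying Lemma~\ref{unicimo b} to peel off the first row and column, then observing that the remaining pair $(A',B')$ again satisfies the hypotheses with the algebraic multiplicity of zero reduced by one. The only slip is a wording glitch --- you write that $A^{(j)}$ has ``first column $e_1$'' when you mean its first column is zero (equivalently, $a_{j-1}=0$), which is exactly what is needed to invoke Lemma~\ref{unicimo b} again; the substance of the argument is unaffected.
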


\begin {proof}
We prove this proposition by induction on the algebraic multiplicity of zero as an eigenvalue of the companion matrix $A$.
If $k=1$, then the conclusion of the proposition follows from Lemma \ref{unicimo b}.

Assume now that $k>1$ and suppose that the proposition holds for all positive semi-commuting square matrices $A'$ and $B'$ where $A'$ is a companion matrix with zero as an eigenvalue of algebraic multiplicity $1\leq k_0\leq k-1.$ By the assumption the matrices $A$ and $B$ are of the form $$A=\left[\begin {array}{cc}
0 & e_1^T\\
0 & A'
\end {array}\right]
 \qquad \textrm{and}\qquad
B=\left[\begin {array}{cc}
\beta & b^T\\
b' & B'
\end {array}
\right]$$ where $A'$ is a positive companion matrix with zero as an eigenvalue of algebraic multiplicity $k-1$. By Lemma \ref{unicimo b} we have $b'=0$. Since positive matrices $A'$ and $B'$ semi-commute and since the companion matrix $A'$ has zero as an eigenvalue of algebraic multiplicity $k-1$, the leading principal minor of order $k-1$ of the matrix $B'$ is upper-triangular by inductive assumption. Furthermore, when $k<n$,  the south-west $(n-k)\times (k-1)$ corner of $B'$ is also zero. To finish the induction step recall that $b'=0$.
\end {proof}

Since zero is an eigenvalue of $J_n$ with algebraic multiplicity $n$, by Proposition \ref{Strukturni} the only  positive matrices that semi-commute with a Jordan block are upper-triangular ones.

\begin {corollary}\label{Jn}
Every positive $n\times n$ matrix which semi-commutes with a Jordan block $J_n$ is upper-triangular.
\end {corollary}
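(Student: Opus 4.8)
The plan is to obtain Corollary \ref{Jn} as an immediate special case of Proposition \ref{Strukturni}. The first step is to recognize the nilpotent Jordan block $J_n$ as a companion matrix: it is precisely the companion matrix displayed at the beginning of this section with $a_0=a_1=\cdots=a_{n-1}=0$, i.e. the matrix whose only nonzero entries are the $1$'s on the superdiagonal (so that $J_ne_j=e_{j-1}$ for $2\le j\le n$ and $J_ne_1=0$). Consequently the minimal (and characteristic) polynomial of $J_n$ is $\lambda^n$, and zero is an eigenvalue of $J_n$ of algebraic multiplicity $k=n$.

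The second step is to apply Proposition \ref{Strukturni} with $A=J_n$ and with $B$ an arbitrary positive matrix that semi-commutes with $J_n$. Since $k=n$, the leading principal submatrix $B_1$ of order $k$ is the whole matrix $B$, and the proposition asserts that $B_1$ is upper-triangular (its second assertion, about the block form when $k<n$, is vacuous here). Hence $B$ is upper-triangular, which is exactly the claim. One could equally well reprove this by unwinding the induction inside the proof of Proposition \ref{Strukturni}: writing $J_n=\left[\begin{array}{cc}0 & e_1^T\\ 0 & J_{n-1}\end{array}\right]$, Lemma \ref{unicimo b} forces the first column of $B$ below the $(1,1)$ entry to vanish, and one repeats the argument with the smaller Jordan block $J_{n-1}$.

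There is no genuine obstacle here: the entire content is the identification of $J_n$ with the appropriate companion matrix together with the trivial computation of the algebraic multiplicity of $0$. The only point deserving a moment's attention is the orientation convention fixed in Section 2 — that $J_n$ is the upper shift — which is precisely what makes ``upper-triangular'' (rather than ``lower-triangular'') the correct conclusion.
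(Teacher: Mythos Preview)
Your proposal is correct and is essentially identical to the paper's own argument: the paper simply observes that zero is an eigenvalue of $J_n$ with algebraic multiplicity $n$ and invokes Proposition \ref{Strukturni} directly. Your additional remarks about unwinding the induction via Lemma \ref{unicimo b} and about the orientation convention are accurate but not needed for the deduction.
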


As an application of Proposition \ref{Strukturni} we will determine the upper bound for the dimension of the unital algebra generated by two semi-commuting positive matrices where one of them is a companion matrix.

\begin {theorem}\label{o zgornji meji}
Let $A$ and $B$ be positive semi-commuting $n\times n$ matrices and suppose that $A$ is a companion matrix. If zero is an eigenvalue of the matrix $A$ with algebraic multiplicity $k$ for some $0\leq k\leq n$, then the dimension of the unital algebra generated by $A$ and $B$ is at most $\frac{(2n-k)(k+1)}{2}.$
\end {theorem}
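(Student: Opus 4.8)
The plan is to combine the structural description of $B$ coming from Proposition \ref{Strukturni} with Gerstenhaber's theorem applied to the block where the two matrices genuinely commute, together with the trivial dimension count for the upper-triangular part. By Proposition \ref{Strukturni}, in the standard basis $B$ has the block form $\left[\begin{smallmatrix} B_1 & B_2 \\ \mathbf 0 & B_3\end{smallmatrix}\right]$ (when $k<n$; when $k=n$ the matrix $B$ is simply upper-triangular, so the whole algebra lives inside the $\tfrac{n(n+1)}{2}$-dimensional upper-triangular algebra and the bound $\tfrac{(2n-k)(k+1)}{2}=\tfrac{n(n+1)}{2}$ holds at once). Here $B_1$ is the leading $k\times k$ block and is upper-triangular, while the companion matrix $A$ in this same basis is block upper-triangular $\left[\begin{smallmatrix} A_1 & A_2 \\ \mathbf 0 & A_3\end{smallmatrix}\right]$ where $A_1$ is the nilpotent Jordan block $J_k$ (this is exactly the part of $A$ corresponding to the eigenvalue $0$, made explicit in the description of companion matrices with $a_0=\cdots=a_{k-1}=0$) and $A_3$ is an $(n-k)\times(n-k)$ companion matrix with $0$ no longer an eigenvalue, hence invertible.

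The key observation is that the algebra $\mathcal A$ generated by $A$ and $B$ sits inside the algebra $\mathcal T$ of all $n\times n$ matrices that are block upper-triangular with respect to the splitting $\mathbb C^n=\mathbb C^k\oplus\mathbb C^{n-k}$ \emph{and} whose leading $k\times k$ block is upper-triangular. Indeed $A$ and $B$ both lie in $\mathcal T$, and $\mathcal T$ is an associative unital subalgebra, so $\mathcal A\subseteq\mathcal T$. Thus it suffices to bound $\dim\mathcal T$, or rather to bound the dimension of the image of $\mathcal A$ under the relevant coordinate projections. Projecting onto the $(1,1)$-block gives a unital algebra generated by $A_1=J_k$ and the upper-triangular matrix $B_1$; this image has dimension at most $\tfrac{k(k+1)}{2}$ simply because it consists of upper-triangular $k\times k$ matrices. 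Projecting onto the $(3,3)$-block gives the unital algebra generated by the companion matrix $A_3$ and the matrix $B_3$; these are positive and semi-commute (the commutator relation passes to the $(3,3)$-block), so by Gerstenhaber's theorem (Theorem \ref{Gerstenhaber}) this image has dimension at most $n-k$. Finally the $(1,3)$-block contributes an ambient space of dimension $k(n-k)$. Assembling: $\dim\mathcal A\le \tfrac{k(k+1)}{2}+k(n-k)+(n-k)$, and a short algebraic simplification shows this equals $\tfrac{(2n-k)(k+1)}{2}$.

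The one genuine subtlety—the step I expect to need the most care—is justifying the additive bound $\dim\mathcal A\le \dim\pi_{11}(\mathcal A)+\dim\pi_{13}(\mathcal A)+\dim\pi_{33}(\mathcal A)$. This is not automatic: it needs that the map $\mathcal A\to \pi_{11}(\mathcal A)\times \pi_{33}(\mathcal A)$, together with the $(1,3)$-coordinate, is injective on $\mathcal A$, which follows because $\mathcal T$ itself is the direct sum (as a vector space) of its $(1,1)$, $(1,3)$ and $(3,3)$ components plus the zero $(3,1)$ component—so an element of $\mathcal T$ is determined by those three blocks. Hence $\dim\mathcal A\le \dim(\text{image of }\mathcal A\text{ in the }(1,1)\text{-block space})+\dim(\text{image in }(1,3))+\dim(\text{image in }(3,3))$, and each term is bounded as above; the middle term is at most $\dim\mathbb C^{k\times(n-k)}=k(n-k)$. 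A final remark: when $k=0$ the claimed bound is $n$, which is exactly Gerstenhaber's theorem for the commuting pair $A,B$ (zero is not an eigenvalue of $A$, so $A$ is invertible, but more simply Proposition \ref{Strukturni} is vacuous and one argues directly as in the $k=n$ degenerate reading—actually here one should note $A$ companion, $B$ positive semi-commuting, and invoke that a companion matrix with $0$ not an eigenvalue still forces, via the chain argument, $\dim\mathcal A\le n$); the boundary cases $k=0$ and $k=n$ should be dispatched separately at the start to keep the block decomposition honest.
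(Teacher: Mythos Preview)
Your overall architecture matches the paper's proof: block-decompose via Proposition \ref{Strukturni}, bound the $(1,1)$-block by $\tfrac{k(k+1)}{2}$, the off-diagonal by $k(n-k)$, and the $(2,2)$-block by $n-k$, then add. The dimension-counting argument via the vector-space splitting of $\mathcal T$ is fine.

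There is, however, a genuine gap in your treatment of the $(2,2)$-block (and, correspondingly, of the boundary case $k=0$). You write that $A_3$ and $B_3$ ``are positive and semi-commute \ldots\ so by Gerstenhaber's theorem (Theorem \ref{Gerstenhaber}) this image has dimension at most $n-k$.'' But Theorem \ref{Gerstenhaber} requires the matrices to \emph{commute}, and semi-commutation alone does not give a bound of $n-k$: indeed the entire point of Theorem \ref{dimenzija supercentralizatorja} is that semi-commuting positive matrices can generate an algebra of dimension up to $\tfrac{(n-k)(n-k+1)}{2}$. What is missing is the observation that $A_3$ is a companion matrix whose constant coefficient $a_k$ is nonzero (precisely because the algebraic multiplicity of $0$ in $A$ is exactly $k$), and that such a companion matrix is \emph{ideal-irreducible}. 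Once this is shown, Lemma \ref{komutiranje} (equivalently Corollary \ref{pozitivni Gerstenhaber}) forces $A_3B_3=B_3A_3$, and only then does Gerstenhaber apply. The paper carries out exactly this step: it first proves directly that a positive companion matrix with $a_0\neq 0$ has no nontrivial invariant ideal, and then invokes Corollary \ref{pozitivni Gerstenhaber}.

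The same omission recurs in your handling of $k=0$: you gesture at a ``chain argument'' but never produce the bound $n$. Again what is needed is the ideal-irreducibility of $A$ (since $a_0\neq 0$) together with Corollary \ref{pozitivni Gerstenhaber}. Without this ingredient your proof does not close in either the $k=0$ case or the general $(2,2)$-block estimate.
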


If $k=0$ or $k=n$, the upper bounds given by Theorem \ref{o zgornji meji} are $n$ and $\frac{n(n+1)}{2}$, respectively. The upper bound in the case $k=0$ can be obtained by Corollary \ref{pozitivni Gerstenhaber}, since $k=0$ implies ideal-irreducibility of $A$. When $k=n$, then $A$ is actually $J_n$, and $B$ is upper-triangular by Corollary \ref{Jn}. The upper bound $\frac{n(n+1)}{2}$ can be attained  when $B$ is chosen to be an appropriate diagonal matrix by Theorem \ref{doseg meje}.

\begin {proof}

Suppose first that $k=0$ and write
$$A=\left[
\begin {array}{ccccc}
0 & 1 & & & \\
0 & 0 & 1 & &\\
\vdots & \vdots &\ddots &\ddots & \\
0 & 0 & \hdots & 0& 1\\
a_0 & a_1 & a_2 & \hdots & a_{n-1}
\end {array}
\right].
$$
Since $k=0$ we have $a_0\neq 0.$ We claim that $A$ is ideal-irreducible. To prove this, suppose that $\mathcal J$ is a nonzero ideal invariant under $A$. Then $e_j\in \mathcal J$ for some $1\leq j\leq n.$ If $j>1$,
then $e_l\leq A^{j-l}e_j\in \mathcal J$ for all $1\leq l\leq j.$  In particular $e_1\in \mathcal J$. Since $e_n=\frac{1}{a_0}Ae_1\in \mathcal J$ by the above the ideal $\mathcal J$ contains all standard basis vectors of $\mathbb R^n$ which implies that $A$ is ideal-irreducible.
Since $A$ is ideal-irreducible, the unital algebra generated by $A$ and $B$ is at most $n$-dimensional by Corollary \ref{pozitivni Gerstenhaber}. This finishes the proof in the case when $k=0$.

Suppose now that $0<k\leq n$. By Proposition \ref{Strukturni} the matrix $B$ is of the form
$$B=\left[
\begin{array}{cc}
B_1 & B_2\\
\mathbf{0} & B_3
\end{array}\right]
$$
for some upper-triangular $k\times k$-matrix $B_1$. Then the matrix $A$ can be written in the form
$$A=\left[
\begin{array}{cc}
J_k & \star \\
\mathbf{0} & A'
\end{array}\right]$$
where
$A'$ is an ideal-irreducible companion matrix. Since  $B_1$ is upper-triangular, the unital algebra generated by $J_k$ and $B_1$ is at most $\frac{k(k+1)}{2}$-dimensional. Since $A'$ is ideal-irreducible and semi-commutes with $B_3$, the dimension of the unital algebra generated by $A'$ and $B_3$ is at most $n-k$ by Corollary \ref{pozitivni Gerstenhaber}. Finally we conclude that the dimension of the unital algebra generated by $A$ and $B$ is at most
$$\frac{k(k+1)}{2}+k(n-k)+n-k=\frac{(2n-k)(k+1)}{2}.$$
\end {proof}

\section {Positive idempotents}

In this section we consider the case of idempotent positive $n\times n$ matrices $E$ and $F$ satisfying $EF\geq FE$. By Theorem \ref {dimenzija supercentralizatorja} the dimension of the unital algebra $\mathcal A$ generated by $E$ and $F$ is at most $\frac{n(n+1)}{2}$. This upper bound can be reduced significantly as it is shown in Theorem \ref{12}.
The special cases when $E$ or both $E$ and $E^T$ are strictly positive are considered in Theorem \ref{dim =4,6}. It is proved that the dimension of $\mathcal A$ is at most $6$ in that case. Theorems \ref{dim =4,6} and \ref{12} are proved in the general setting of vector and Banach lattices.

As we have seen in Lemma \ref{komutiranje}, semi-commuting positive matrices commute whenever one of the matrices is ideal-irreducible. If we replace ideal-irreducibility with strict positivity, the following two lemmas show that we still obtain nice algebraic relations  between two semi-commuting positive idempotent linear operators. These relations enable us to significantly reduce the dimension of the unital algebra generated by them.

\begin {lemma}\label{LemaOProduktu1}
Let $E$ and $F$ be positive idempotent linear operators on a vector lattice $L$ which satisfy  $EF\geq FE$.
\begin {enumerate}
\item [(a)] If $E$ is strictly positive, then $EFE=FE$ and $FE$ is an idempotent.
\item [(b)] If $F$ is strictly positive, then $FEF=EF$ and $EF$ is an idempotent.
\end {enumerate}
Assume additionally that $L$ is a Banach lattice.
\begin {enumerate}
\item [(c)] If $F^*$ is strictly positive, then $FEF=FE$ and $FE$ is an idempotent.
\item [(d)] If $E^*$ is strictly positive, then $EFE=EF$ and $EF$ is an idempotent.
\end {enumerate}
\end {lemma}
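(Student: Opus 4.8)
The plan is to prove (a) and (b) directly, using the idempotency $E^2=E$ (resp.\ $F^2=F$) together with the following cancellation property of a strictly positive operator $T$ on $L$: if $S\geq 0$ and $TS=0$, then $S=0$. Indeed, for every $x\in L^+$ the vector $Sx=S|x|$ is positive and satisfies $T|Sx|=T(Sx)=(TS)x=0$, so $Sx\in\mathcal N(T)=\{0\}$; since every vector of $L$ is a difference of two positive vectors, this forces $S=0$. Parts (c) and (d) will then be deduced from (a) and (b) by passing to Banach-space adjoints.

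For (a), multiply the inequality $EF\geq FE$ on the right by $E\geq 0$: this gives $EFE-FE=(EF-FE)E\geq 0$, while $E^2=E$ yields $E(EFE-FE)=EFE-EFE=0$. The cancellation property with $T=E$ and $S=EFE-FE$ therefore forces $EFE=FE$; consequently $(FE)^2=F(EFE)=F(FE)=F^2E=FE$, so $FE$ is idempotent. Part (b) is the mirror image: multiplying $EF\geq FE$ on the right by $F\geq 0$ gives $EF-FEF=(EF-FE)F\geq 0$ and $F(EF-FEF)=FEF-FEF=0$, so strict positivity of $F$ forces $FEF=EF$, whence $(EF)^2=E(FEF)=E(EF)=EF$.

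For (c) and (d) we pass to the dual Banach lattice $L^*$, on which $E^*$ and $F^*$ are again positive idempotents (recall that positive operators on a Banach lattice are bounded, so these adjoints are well defined). Taking adjoints in $EF\geq FE$ reverses products and preserves positivity, hence $F^*E^*\geq E^*F^*$. If $F^*$ is strictly positive, then part (a) applied with $F^*$ in the role of $E$ and $E^*$ in the role of $F$ gives $F^*E^*F^*=E^*F^*$, that is $(FEF)^*=(FE)^*$; since $T\mapsto T^*$ is injective on bounded operators, $FEF=FE$, and then $(FE)^2=(FEF)E=(FE)E=FE$. This is (c). Part (d) is obtained in exactly the same way from part (b) applied to the pair $(F^*,E^*)$: strict positivity of $E^*$ gives $E^*F^*E^*=F^*E^*$, i.e.\ $(EFE)^*=(EF)^*$, hence $EFE=EF$ and $(EF)^2=(EFE)F=(EF)F=EF$.

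All the computations are short; the only points that require care are bookkeeping ones. In (a) and (b) one must multiply by the idempotent on the correct side — left multiplication by $E$ in (a), or by $F$ in (b), would not produce a term annihilated by the idempotent — and in (c) and (d) one must keep in mind that the adjoint reverses products, so that the already established statements (a) and (b) have to be applied to the pair $(F^*,E^*)$ rather than $(E^*,F^*)$.
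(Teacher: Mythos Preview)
Your proof is correct and follows essentially the same approach as the paper: the cancellation property you isolate at the outset is precisely the argument the paper uses when it observes that $E(EF-FE)E=0$ together with strict positivity of $E$ forces $(EF-FE)E=0$, and the reduction of (c) and (d) to (a) and (b) via adjoints is identical. The only cosmetic difference is that you package the strict-positivity step as a reusable lemma and give (b) explicitly, whereas the paper leaves (b) and (d) to the reader.
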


\begin {proof}
To see (a), first note that the positive operator $E(EF-FE)E$ is zero. Since $E$ is strictly positive, for every positive vector $x\in L$ we have $(EF-FE)Ex=0.$ The fact that the positive cone $L^+$ of $L$ spans $L$ gives us $EFE=FE$. By multiplying the equality $EFE=FE$ with $F$ on the left-hand side we obtain $(FE)^2=FE.$

To prove (c), assume that $F^*$ is strictly positive on the dual Banach lattice $L^*$. Since $E$ and $F$ are idempotents, their adjoint operators $E^*$ and $F^*$ are idempotents as well. Since
$EF\geq FE$ also implies $F^*E^*\geq E^*F^*$, by  (a) we obtain $F^*E^*F^*=E^*F^*$ and that $(FE)^*=E^*F^*$ is an idempotent.
This immediately implies that $FEF=FE$ and that $FE$ is an idempotent.

We omit the proofs of (b) and (d) since they are very similar to the proofs of (a) and (c), respectively.
\end {proof}

\begin {lemma}\label{LemaOProduktu}
Let $E$ and $F$ be positive idempotent linear operators on a vector lattice $L$ which satisfy  $EF\geq FE$.
\begin {enumerate}
\item [(a)] If at least one of $E$ and $F$ is strictly positive, then $(EF-FE)^2=0.$
\end {enumerate}
Assume additionally that $L$ is a Banach lattice.
\begin {enumerate}
\item [(b)] If at least one of $E^*$ and $F^*$ is strictly positive, then $(EF-FE)^2=0.$
\item [(c)] If $E$ and $E^*$ or if $F$ and $F^*$ are strictly positive, then $EF=FE.$
\end {enumerate}
\end {lemma}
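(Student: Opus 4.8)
The plan is to derive all three parts directly from the product identities established in Lemma~\ref{LemaOProduktu1}, so that no lattice-theoretic input beyond that lemma is required. Throughout I would freely use $E^2=E$ and $F^2=F$ to collapse products such as $EF^2E=EFE$ and $FE^2F=FEF$.

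For part~(a) I would first treat the case where $E$ is strictly positive, in which Lemma~\ref{LemaOProduktu1}(a) supplies $EFE=FE$. Expanding the square,
$$(EF-FE)^2 = EFEF - EFE - FEF + FEFE,$$
and then using $EFEF=(EFE)F=(FE)F=FEF$, $FEFE=F(EFE)=F(FE)=FE$ and $EFE=FE$ collapses the right-hand side to $FEF-FE-FEF+FE=0$. When instead $F$ is strictly positive, the identical computation works after replacing Lemma~\ref{LemaOProduktu1}(a) by Lemma~\ref{LemaOProduktu1}(b), which gives $FEF=EF$; then $EFEF=E(FEF)=E(EF)=EF$, $FEFE=(FEF)E=(EF)E=EFE$, and the square collapses to $EF-EFE-EF+EFE=0$.

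For part~(b) I would pass to the adjoints on the dual Banach lattice $L^*$. Since adjoints of positive operators are positive and adjoints of idempotents are idempotents, $E^*$ and $F^*$ are positive idempotents, and $EF\ge FE$ gives $F^*E^*=(EF)^*\ge(FE)^*=E^*F^*$. If $E^*$ is strictly positive, then part~(a) applied to the pair $(F^*,E^*)$ yields $(F^*E^*-E^*F^*)^2=0$, that is, $\big((EF-FE)^2\big)^*=0$, whence $(EF-FE)^2=0$; the case where $F^*$ is strictly positive is the same, since part~(a) requires only one of its two operators to be strictly positive.

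For part~(c), if $E$ and $E^*$ are both strictly positive I would combine Lemma~\ref{LemaOProduktu1}(a) and~(d) to get $FE=EFE=EF$, and if $F$ and $F^*$ are both strictly positive I would combine Lemma~\ref{LemaOProduktu1}(b) and~(c) to get $EF=FEF=FE$. I do not expect a genuine obstacle here: the only place that needs care is keeping track of the order reversal under the adjoint in part~(b), so that the strictly positive operator lands in a slot where part~(a) can exploit it; everything else is a mechanical substitution of the identities provided by Lemma~\ref{LemaOProduktu1}.
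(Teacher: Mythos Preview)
Your proposal is correct and follows essentially the same approach as the paper: expand $(EF-FE)^2$ using idempotency, collapse it via the identities from Lemma~\ref{LemaOProduktu1}, dualize for part~(b), and combine the two $EFE$/$FEF$ identities for part~(c). You supply a bit more detail than the paper (which omits the $F$-strictly-positive computation and the explicit combination in (c)), but the argument is the same.
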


\begin {proof}
If  $E$ is strictly positive, then  Lemma \ref{LemaOProduktu1} implies
\begin {align*}
(EF-FE)^2&=EFEF-EFE-FEF+FEFE\\
&=FEF-FE-FEF+FE=0.
\end {align*}
The case when $F$ is strictly positive can be treated similarly.

To prove (b), note  that by Lemma \ref{LemaOProduktu1} we can similarly as in (a) obtain $(F^*E^*-E^*F^*)^2=0$. The latter equality immediately implies $(EF-FE)^2=0$.  The proof of (c) immediately follows from  Lemma \ref{LemaOProduktu1}.
\end {proof}

Surprisingly, Lemma \ref{LemaOProduktu} holds more generally.

\begin {theorem}
Let $E$ and $A$ be linear operators on a vector lattice $L$ which satisfy $EA-AE\geq 0$. Suppose also that $E$ is a positive idempotent.
\begin {enumerate}
\item [(a)] If $E$ is strictly positive, then $AE=EAE$ and $(EA-AE)^2=0$.
\end {enumerate}
Assume additionally that $L$ is a Banach lattice.
\begin {enumerate}
\item [(b)] If $E^*$ is strictly positive, then $EA=EAE$ and $(EA-AE)^2=0$.
\item [(c)] If $E$ and $E^*$ are strictly positive, then $EA=AE$.
\end {enumerate}
\end {theorem}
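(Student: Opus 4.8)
The plan is to follow exactly the same pattern that produced Lemmas~\ref{LemaOProduktu1} and~\ref{LemaOProduktu}, but now with the idempotent $F$ replaced by an arbitrary linear operator $A$; the point is that those earlier arguments never really used that $F$ was idempotent except at the very last step, so the ``product identities'' survive while only the conclusion $(EA-AE)^2=0$ must be re-derived.

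For part (a), first I would observe that the positive operator $E(EA-AE)E$ equals $(E^2)AE - E A(E^2)\cdot$\dots\ more precisely, using $E^2=E$, a direct computation gives $E(EA-AE)E = EAE - EAE = 0$. Since $E$ is strictly positive and $E(EA-AE)E = E\bigl((EA-AE)E\bigr)$ is the image under $E$ of the positive operator $(EA-AE)E$ (positive because $EA-AE\geq 0$ and $E\geq 0$), strict positivity forces $(EA-AE)Ex = 0$ for every $x\in L^+$; as $L^+$ spans $L$, we get $(EA-AE)E = 0$, i.e.\ $AE = EAE$. Then
\begin{align*}
(EA-AE)^2 &= EAEA - EAAE - AEEA + AEAE\\
&= EAEA - EAE\cdot A\ \text{??}
\end{align*}
so let me instead just expand and substitute $AE=EAE$: $(EA-AE)^2 = EA\cdot EA - EA\cdot AE - AE\cdot EA + AE\cdot AE$. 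Using $AE=EAE$ in the last three terms (and $E^2=E$ where convenient) each term collapses: $AE\cdot EA = EAE\cdot A\cdot$\dots — the cleanest route is to note $AE=EAE$ implies $(EA-AE)E = 0$, hence $(EA-AE)^2 = (EA-AE)(EA-AE) = (EA-AE)E A - (EA-AE) A E = 0 - (EA-AE)AE$, and then $(EA-AE)AE = (EA-AE)\cdot EAE = \bigl((EA-AE)E\bigr)AE = 0$. So $(EA-AE)^2=0$.

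For part (b) I would dualize exactly as in Lemma~\ref{LemaOProduktu1}(c): since $E$ is a positive idempotent, so is $E^*$, and $EA-AE\geq 0$ gives $A^*E^* - E^*A^* \geq 0$, i.e.\ $E^*A^* - A^*E^* \leq 0$; applying part (a) to $-A^*$ (or symmetrically, running the part (a) argument with the roles of the factors on the two sides swapped, which works because $E^*$ is strictly positive) yields $E^*A^* = E^*A^*E^*$ and $(E^*A^*-A^*E^*)^2 = 0$, and transposing back gives $EA = EAE$ and $(EA-AE)^2=0$. Here I should be a little careful that the sign flip is handled correctly — squaring kills the sign, but for the product identity I want to make sure the strictly-positive factor ends up on the correct side; this is the one spot where a moment's thought is needed rather than pure transcription. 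Part (c) is then immediate: under the hypotheses of both (a) and (b) we have $AE = EAE = EA$.

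The main obstacle, such as it is, is purely bookkeeping: making sure that in part (b) the asymmetry between ``$E$ strictly positive'' (which gave $AE=EAE$, the strictly-positive factor absorbing on the \emph{right}) and ``$E^*$ strictly positive'' (which should give $EA=EAE$, absorbing on the \emph{left}) is tracked correctly through the adjoint, and that the earlier proofs of Lemma~\ref{LemaOProduktu1} are invoked in the form that does not secretly use idempotency of the second operator. Everything else is a one-line algebraic identity using only $E^2=E$.
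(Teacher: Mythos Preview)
Your proposal is correct, and for part (a) your derivation of $AE=EAE$ matches the paper's exactly. For the second claim $(EA-AE)^2=0$, however, you take a somewhat different (and in fact slicker) route than the paper. The paper expands the square and, using $EAE=AE$, simplifies it to the explicit expression $(EA-AE)^2 = A^2E - EA^2E$; it then observes this operator is positive and killed on the left by $E$, and invokes strict positivity of $E$ a \emph{second} time to conclude it vanishes. Your argument instead uses the already-established identity $(EA-AE)E=0$ twice: writing $(EA-AE)^2 = (EA-AE)EA - (EA-AE)AE$ and substituting $AE=EAE$ in the second term, both summands contain a factor $(EA-AE)E$ and vanish purely algebraically, with no further appeal to strict positivity. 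Your route is shorter; the paper's route has the minor side benefit of exhibiting the closed form $A^2E-EA^2E$ for the square. For (b) and (c) the paper simply says the proofs are similar and omits them, so your dualization via $-A^*$ (which is exactly the right way to handle the sign flip you flagged) is in the same spirit.
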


\begin {proof}
To see (a), assume that $E$ is strictly positive. Since
$E(EA-AE)E=0$, positivity of $EA-AE$ and strict positivity of $E$ imply $EAE-AE=(EA-AE)E=0$.
Applying the identity $EAE=AE$ we obtain
\begin {align*}
(EA-AE)^2&=EAEA-EA^2E-AE^2A+AEAE\\
&=AEA-EA^2E-AEA+A^2E\\
&=A^2E-EA^2E.
\end {align*}
Since $(EA-AE)^2$ is positive as a product of positive operators, and since $E(EA-AE)^2=EA^2E-EA^2E=0,$ strict positivity of $E$ implies $(EA-AE)^2=0.$

Since the proof of (b) is similar to the proofs of (a) and Lemma \ref{LemaOProduktu1}(c), we omit it. From (a) and (b) we immediately obtain (c).
\end {proof}

The upper bound for the dimension of the unital algebra $\mathcal A$ generated by two semi-commuting positive $n\times n$ matrices $A$ and $B$ is at most $\frac{n(n+1)}{2}$ by Theorem \ref{dimenzija supercentralizatorja}.  Furthermore, if one of the matrices $A$ and $B$ is also ideal-irreducible, then the algebra $\mathcal A$ is at most $n$-dimensional by Corollary \ref{pozitivni Gerstenhaber}. When $A$ and $B$ are idempotents,  the upper bound for the dimension of the algebra $\mathcal A$ can be reduced significantly. In the following theorem we first consider the case when one of idempotents or their adjoints is strictly positive.

\begin {theorem}\label{dim =4,6}
Let $L$ be a vector lattice and let $E$ and $F$ be two positive idempotent operators on $L$ which satisfy $EF\geq FE$. Then the following statements hold:
\begin {enumerate}
\item [(a)] If one of the operators $E$ and $F$ is strictly positive, then the dimension of the unital algebra generated by $E$ and $F$ is at most $6$.
\end {enumerate}
Assume additionally that $L$ is a Banach lattice.
\begin {enumerate}
\item [(b)]If one of the operators $E^*$ and $F^*$ is strictly positive, then the dimension of the unital algebra generated by $E$ and $F$ is at most $6$.
\item [(c)] If $E$ and $E^*$ or if $F$ and $F^*$ are strictly positive, then the dimension of the unital algebra generated by $E$ and $F$ is at most $4$.
\end {enumerate}
\end {theorem}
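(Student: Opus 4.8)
The idea is to use the algebraic relations established in Lemmas \ref{LemaOProduktu1} and \ref{LemaOProduktu} to list a small spanning set for the unital algebra $\mathcal A$ generated by $E$ and $F$. For part (a), assume $E$ is strictly positive (the case of $F$ being strictly positive is symmetric, or can be reduced to it by passing to the adjoint together with part (b); but it is cleaner to treat it directly). Lemma \ref{LemaOProduktu1}(a) gives $EFE=FE$, hence every word in $E$ and $F$ in which an $F$ is flanked on both sides by $E$'s collapses. Together with $E^2=E$, $F^2=F$, this means every product of $E$'s and $F$'s reduces to one of the words
$$I,\; E,\; F,\; EF,\; FE,\; FEF.$$
Indeed, in any alternating word one can repeatedly apply $EFE=FE$ to remove any internal $E$, so the only surviving words are those with no $E$ strictly between two $F$'s, i.e. words of the form $E^{\varepsilon_0}(FE)^a F^{\varepsilon_1}$ — and $EFE=FE$ further kills the leading $E$ whenever it is immediately followed by $FE$, leaving exactly the six listed words. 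Hence $\dim\mathcal A\le 6$.

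For part (b), if $E^*$ is strictly positive then $F^*E^*\ge E^*F^*$ and by part (a) the unital algebra generated by $E^*$ and $F^*$ is at most $6$-dimensional; since that algebra is the image of $\mathcal A$ under the (injective, on a finite-dimensional quotient — or in general isometric) adjoint anti-isomorphism $X\mapsto X^*$, we get $\dim\mathcal A\le 6$. The symmetric case of $F^*$ strictly positive is the same. (Alternatively one repeats the word-reduction argument directly: Lemma \ref{LemaOProduktu1}(d) gives $EFE=EF$, so now the surviving words are $I,E,F,EF,FE,FEF$ with $FEF$ possibly collapsing — in any case at most six.)

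For part (c), assume $E$ and $E^*$ are both strictly positive. Then Lemma \ref{LemaOProduktu}(c) gives $EF=FE$, so $\mathcal A$ is commutative and generated as an algebra by the two commuting idempotents $E,F$; the products $E^aF^b$ with $a,b\in\{0,1\}$ span it, giving the spanning set $\{I,E,F,EF\}$ and $\dim\mathcal A\le 4$. The case when $F$ and $F^*$ are strictly positive is identical after swapping the roles.

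**Main obstacle.** The only genuinely delicate point is the combinatorial bookkeeping in part (a): making precise that every word in $E$ and $F$ reduces, via $E^2=E$, $F^2=F$ and $EFE=FE$, to one of the six words $I,E,F,EF,FE,FEF$. The clean way is induction on the length of the word (or on the number of letters), peeling off the last letter and using the relations to re-collapse; one must be careful that applying $EFE=FE$ can create a new $E^2$ or a new $EFE$ pattern, so the induction should be on length with the six words as the closed list one checks is stable under right-multiplication by $E$ and by $F$. The rest is bookkeeping with the previously proved lemmas, and parts (b) and (c) are then short.
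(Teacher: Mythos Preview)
Your proposal is correct and follows essentially the same route as the paper: for (a) you use Lemma \ref{LemaOProduktu1}(a) to get $EFE=FE$ and then reduce every word to one of $\{I,E,F,EF,FE,FEF\}$ by induction on length (you peel off the rightmost letter and check closure under right-multiplication, whereas the paper peels off the leftmost letter and checks closure under left-multiplication---both work); for (b) you pass to adjoints exactly as the paper does; and for (c) you invoke Lemma \ref{LemaOProduktu}(c) to get $EF=FE$ and the four-element spanning set $\{I,E,F,EF\}$, again matching the paper. One small caution: your parenthetical in (a) about reducing the case ``$F$ strictly positive'' to (b) via adjoints only makes sense when $L$ is a Banach lattice, so for general vector lattices you must indeed treat it directly (as you already note), using Lemma \ref{LemaOProduktu1}(b) to obtain $FEF=EF$ and the analogous six-word list $\{I,E,F,EF,FE,EFE\}$.
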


Since the adjoint operator of a bounded linear operator on a normed space corresponds to the matrix transpose in finite dimensions,
Theorem \ref{dim =4,6} can be applied in the case when $L$ is $\mathbb R^n$ ordered componentwise, and the operators in question are matrices.

\begin {proof}
Let us denote by $\mathcal A$ and $\mathcal W$ the unital algebra generated by $E$ and $F$ and the set of all words in $E$ and $F$, respectively.

To see (a), assume that $E$ is strictly positive. The other case is treated similarly. Note first that by Lemma \ref{LemaOProduktu1} we have $EFE=FE.$   We claim that the algebra $\mathcal A$  is equal to the  linear span $\mathcal L$ of the set $\{I,E,F,EF,FE,FEF\}.$ It should be obvious that $\mathcal L\subseteq \mathcal A$. To prove $\mathcal A\subseteq \mathcal L$, it suffices to see that $\mathcal L$ contains words of arbitrary length.
The latter will be proved by induction on the length of words in $\mathcal W$.

The equality $EFE=FE$ implies that $\mathcal L$ contains all words from $\mathcal W$ of length at most $3$. Suppose now that $\mathcal L$ contains all words from $\mathcal W$ of length at most $n$ for some $n\geq 3$, and let $w_{n+1}\in \mathcal W$ be an arbitrary word of length $n+1$. Then $w_{n+1}$ starts with either $E$ or $F$. Suppose that the former happens. Then $w_{n+1}=Ew_n$ for some word $w_n\in \mathcal W$ of length $n$. Since there exist scalars $\lambda_1,\ldots,\lambda_6$ such that
$$w_n=\lambda_1 I+\lambda_2E+\lambda_3F+\lambda_4EF+\lambda_5FE+\lambda_6FEF,$$
we have
$$w_{n+1}=Ew_n=\lambda_1 E+\lambda_2E+\lambda_3EF+\lambda_4EF+\lambda_5EFE+\lambda_6EFEF.$$
From the last line and the equality $EFEF=FEF$ we can conclude that  $w_{n+1}$ can be written as a linear combination of words from $\mathcal W$ of length at most $3$, and since $EFE=FE$, $w_{n+1}$ is contained in $\mathcal L$. The case when the word $w_{n+1}$ starts with $F$ can be dealt similarly. Now it should be obvious that the dimension of $\mathcal A$ is at most $6$.

If one of the operators $E^*$ and $F^*$ is strictly positive, then by (a) the unital algebra generated by $E^*$ and $F^*$, and hence $\mathcal A$,  is at most $6$-dimensional.

To see (c), assume that $E$ and $E^*$ or $F$ and $F^*$ are strictly positive. By Lemma \ref{LemaOProduktu} the operators $E$ and $F$ commute. Therefore, an arbitrary word $w\in \mathcal W$ equals to $EF$ whenever $E$ and $F$ both appear in $w$. This implies that the algebra $\mathcal A$ is linearly spanned by the set $\{I,E,F,EF\}.$
\end {proof}

As we have seen in Theorem \ref{dim =4,6}, the dimension of the unital algebra generated by two positive idempotents $E$ and $F$ on a Banach lattice with $EF\geq FE$ can be bounded by $6$ if one of the operators $E$ and $F$ is strictly positive. Surprisingly, the dimension of the respective algebra is  very small even without the assumption on strict positivity.
First we need some technical preparation.

Let us denote  the set $\{I,E,F,EF\}$ by $\mathcal C_0$. For each $n\in \mathbb N$ we define  $\mathcal C_n$ inductively as $\mathcal C_n=[E,F]\cdot \mathcal C_{n-1}.$ The set $\mathcal G_m$ is defined as
the linear span  of the set $\bigcup\limits_{n=0}^m\mathcal C_n$ for each $m\in\mathbb N_0.$
Furthermore, let us denote by $\mathcal A$ and
$\mathcal F_n$ the unital algebra generated by $E$ and $F$ and the set of all words in $E$ and $F$ of length $n$, respectively. If $n=2m$ for some $m\in \mathbb N$, then
$\mathcal F_n=\{(EF)^m,(FE)^m\}.$ If $n=2m-1$ for some $m\in \mathbb N$, then
$\mathcal F_n=\{(EF)^{m-1}E,(FE)^{m-1}F\}.$ Therefore, the algebra $\mathcal A$ is spanned by $\{I\}\cup \bigcup\limits_{n=1}^\infty \mathcal F_n$ as a vector space.
For $m\in \mathbb N$ we define the vector subspace $\mathcal V_m$ as the linear span of the set  $\{I\}\cup \bigcup\limits_{n=1}^m \mathcal F_n.$

\begin {lemma}\label{GN}
For each $n\in \mathbb N_0$ we have
$$\mathcal G_n=\spam \left(\mathcal V_{2n+1} \cup \{[E,F]^n EF\}\right).$$
\end {lemma}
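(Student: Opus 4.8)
The plan is to prove the identity by induction on $n$, exploiting the recursive definitions of both sides together with a few basic commutation identities for words in $E$ and $F$. For the base case $n=0$ we must show $\mathcal{G}_0 = \spam(\mathcal{V}_1 \cup \{[E,F]EF\})$. By definition $\mathcal{G}_0$ is the linear span of $\mathcal{C}_0 = \{I, E, F, EF\}$, which is exactly $\mathcal{V}_1 = \spam(\{I\} \cup \mathcal{F}_1 \cup \mathcal{F}_2) = \spam\{I, E, F, EF, FE\}$ intersected appropriately — here one must notice that $FE = EF - [E,F]$, so $\spam\{I,E,F,EF\}$ together with $[E,F] = [E,F]I$ already contains $FE$; conversely $[E,F]EF$ needs to be accounted for. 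Actually the cleanest reading: $\mathcal{G}_0 = \spam\{I,E,F,EF\}$, while $\spam(\mathcal{V}_1 \cup \{[E,F]EF\})$ contains $I,E,F,EF,FE,[E,F]EF$. So the base case is \emph{not} a trivial equality of obviously-equal sets; instead one shows the extra generators on the right, namely $FE$ and $[E,F]EF$, are already redundant given the left side is augmented correctly — I expect the intended statement makes $\mathcal{V}_1 = \spam\{I,E,F,EF\}$ (since $\mathcal{F}_1 = \{E,F\}$ and $\mathcal{F}_2=\{EF,FE\}$ give $FE$, hmm). The safe approach is to verify the base case directly by writing out all generators of both sides and checking mutual containment, using $FE = EF-[E,F]$ and $[E,F]EF \in \mathcal{G}_0$ by definition of $\mathcal{C}_1 \subseteq \mathcal{G}_1$, wait — here I must be careful that $[E,F]EF \in \mathcal{C}_1 \subseteq \mathcal{G}_1$, not $\mathcal{G}_0$; so the base case identity $\mathcal{G}_0 = \spam(\mathcal{V}_1 \cup \{[E,F]EF\})$ requires showing $[E,F]EF \in \mathcal{G}_0$, which would follow only if $[E,F]EF$ is a combination of shorter words. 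I would resolve this by recomputing: $[E,F]EF = EFEF - FEEF = EFEF - FEF = (EF)^2 - FEF$, and both $(EF)^2 \in \mathcal{F}_4$ and $FEF \in \mathcal{F}_3$ lie in $\mathcal{V}_4$, so in fact $[E,F]EF \in \mathcal{V}_4 \subseteq \mathcal{V}_{2\cdot 0 + 1}$? No, $\mathcal{V}_1 \not\supseteq \mathcal{V}_4$. This tension is precisely the crux, so I flag it.

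The inductive step is where the real content lies. Assume $\mathcal{G}_{n-1} = \spam(\mathcal{V}_{2n-1} \cup \{[E,F]^{n-1}EF\})$. By definition $\mathcal{G}_n = \mathcal{G}_{n-1} + \spam(\mathcal{C}_n) = \mathcal{G}_{n-1} + \spam([E,F]\cdot \mathcal{C}_{n-1})$, and $\mathcal{C}_{n-1}$ consists of the four elements $[E,F]^{n-1}I$, $[E,F]^{n-1}E$, $[E,F]^{n-1}F$, $[E,F]^{n-1}EF$. So I must show that multiplying each of these on the left by $[E,F]$ lands in $\spam(\mathcal{V}_{2n+1} \cup \{[E,F]^n EF\})$, and conversely that the new generators $\mathcal{V}_{2n+1} \setminus \mathcal{V}_{2n-1}$ (i.e. the length-$2n$ and length-$(2n+1)$ words $(EF)^n, (FE)^n, (EF)^n E, (FE)^n F$) and $[E,F]^n EF$ lie in $\mathcal{G}_n$. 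The key computational lemma I would isolate is: for any word $w$, $[E,F]^n w$ can be expanded as a $\mathbb{Z}$-linear combination of honest words in $E,F$, and the maximal length occurring is $2n + \ell(w)$; moreover one can systematically reduce $[E,F]^n E$, $[E,F]^n F$, $[E,F]^n I = [E,F]^n$ modulo shorter-$[E,F]$-power terms to words of bounded length, because $E^2 = E$ and $F^2 = F$ collapse long words down to the alternating forms in $\mathcal{F}_k$. Concretely, $[E,F]^n = [E,F]^{n-1}(EF - FE)$ and one pushes the $[E,F]^{n-1}$ through using the idempotent relations; the bookkeeping that $[E,F]^n \in \mathcal{V}_{2n+1} + \mathbb{C}[E,F]^nEF$ and similarly for the other three generators is the heart of the proof.

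The main obstacle, as signaled above, is controlling exactly which single ``leftover'' term survives the reduction — the statement asserts it is precisely $[E,F]^n EF$, so I need that among $[E,F]^n$, $[E,F]^n E$, $[E,F]^n F$, $[E,F]^n EF$, only $[E,F]^n EF$ fails to reduce into $\mathcal{V}_{2n+1}$, while the other three do reduce (modulo $\spam\{[E,F]^nEF\}$). I expect this comes from identities like $[E,F]^n E = [E,F]^{n-1}(EF-FE)E = [E,F]^{n-1}(EFE - FE)$ and then $EFE$ and $FE$ get handled by the inductive hypothesis applied to $[E,F]^{n-1}$ times a word of length $\le 3$, whereas $[E,F]^n EF$ genuinely requires length up to $2n+2 > 2n+1$. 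So the plan is: (1) establish the word-reduction lemma using $E^2=E$, $F^2=F$; (2) prove by induction that each of $[E,F]^n$, $[E,F]^nE$, $[E,F]^nF$ lies in $\spam(\mathcal{V}_{2n+1} \cup \{[E,F]^nEF\})$, carefully tracking coefficients of the $[E,F]^nEF$ term; (3) conclude $\mathcal{G}_n \subseteq \spam(\mathcal{V}_{2n+1}\cup\{[E,F]^nEF\})$; (4) for the reverse inclusion, expand $(EF)^n$, $(FE)^n$, $(EF)^nE$, $(FE)^nF$, and $[E,F]^nEF$ directly and verify each is a combination of elements of $\mathcal{G}_{n-1}$ and $\mathcal{C}_n$; (5) assemble the base case $n=0$ — for which I would re-examine the precise definition of $\mathcal{V}_1$ so that the claimed equality holds on the nose. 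I anticipate step (2)'s coefficient bookkeeping and step (5)'s base case to require the most care.
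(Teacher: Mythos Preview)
Your base-case confusion stems from two misreadings. First, $[E,F]^0 = I$, so for $n=0$ the element $[E,F]^nEF$ is simply $EF$, not $[E,F]EF$. Second, by definition $\mathcal{V}_m = \spam\bigl(\{I\}\cup\bigcup_{j=1}^{m}\mathcal{F}_j\bigr)$, so $\mathcal{V}_1 = \spam\{I,E,F\}$, not $\spam\{I,E,F,EF,FE\}$. With these corrections the base case is the trivial equality $\spam\{I,E,F,EF\} = \spam\{I,E,F\} + \spam\{EF\}$, and the tension you flagged evaporates.

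Your inductive plan is viable but more laborious than necessary, and misses the simplification the paper uses. The paper first observes that
\[
[E,F]^nEF = (EF)^{n+1} + t_n(E,F),
\]
where $t_n(E,F)$ is a sum of words of length at most $2n+1$; hence
\[
\spam\bigl(\mathcal{V}_{2n+1}\cup\{[E,F]^nEF\}\bigr) \;=\; \spam\bigl(\mathcal{V}_{2n+1}\cup\{(EF)^{n+1}\}\bigr).
\]
This reduces the lemma to showing that $\mathcal{G}_n$ is spanned by all reduced words of length at most $2n+2$ \emph{except} $(FE)^{n+1}$, which is the ``tedious induction'' the paper alludes to. That reformulation makes the forward inclusion immediate: each of $[E,F]^n$, $[E,F]^nE$, $[E,F]^nF$ is, upon expansion, a $\mathbb{Z}$-linear combination of words of length at most $2n$, $2n+1$, $2n+1$ respectively, so all three lie in $\mathcal{V}_{2n+1}$ with no coefficient bookkeeping needed --- your step (2) is therefore far easier than you anticipated. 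The only element whose expansion reaches length $2n+2$ is $[E,F]^nEF$, and its unique longest word is $(EF)^{n+1}$ (the other length-$(2n+2)$ word $(FE)^{n+1}$ cannot arise since every term in $[E,F]^nEF$ ends in $F$).

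For the reverse inclusion one extracts the new words $(FE)^n$, $(EF)^nE$, $(FE)^nF$, $(EF)^{n+1}$ one at a time from $[E,F]^n$, $[E,F]^nE$, $[E,F]^nF$, $[E,F]^nEF$ modulo shorter words already known to lie in $\mathcal{G}_n$; this is exactly the content of your step (4), and it works. So once the base case is corrected and the expansion-length observation is made explicit, your outline becomes a correct proof essentially matching the paper's.
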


\begin {proof}
Since we have
$$[E,F]^n\cdot EF=(EF)^{n+1}+t_n(E,F),$$ where
$t_n(E,F)$ is a sum of words in $E$ and $F$ of length at most $2n+1$, we obtain
\begin {equation}\label{keks}
\spam \left(\mathcal V_{2n+1} \cup \{[E,F]^n EF\}\right)=\spam\left(\mathcal V_{2n+1} \cup \{(EF)^{n+1}\}\right).
\end {equation}
By a tedious induction which is left to the reader one can see that $\mathcal G_n$ is spanned by the set of all words of length at most $2n+2$ except $(FE)^{n+1}$.
Therefore, $\mathcal G_n$ is spanned by the set $\{I\}\cup \bigcup\limits_{j=1}^{2n+1} \mathcal F_j\cup\{(EF)^{n+1}\}.$ The conclusion of the lemma now immediately follows.
\end {proof}

Lemma \ref{GN} immediately implies that the algebra $\mathcal A$ is spanned by the set $\bigcup\limits_{n=0}^\infty \mathcal C_n.$

\begin {theorem}\label{12}
Let $E$ and $F$ be positive idempotent linear operators on a vector lattice $L$ with the projection property. If $F$ is order continuous and the commutator $EF-FE$ is positive, then the dimension of the unital algebra generated by $E$ and $F$ is at most $9$.
\end {theorem}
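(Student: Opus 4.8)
The plan is to localize at the band $M:=\mathcal N(F)$, prove that $N:=EF-FE$ satisfies $N^{3}=0$ together with the annihilation identities $N^{2}F=N^{2}EF=0$, thereby reduce $\mathcal A$ to an explicit ten–element spanning set, and finally eliminate one element by a dimension count of the ideal of operators whose range lies in $M$.

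First I would set up the band. Since $F$ is order continuous, $M=\mathcal N(F)$ is a band, hence — by the projection property — a projection band, so $L=M\oplus M^{d}$ is an order direct sum. For $z\in M^{+}$ one has $Fz=0$, hence $Nz=EFz-FEz=-FEz\le 0$, while $Nz\ge 0$ because $N\ge 0$ and $z\ge 0$; therefore $Nz=0$ and $FEz=0$. The first relation gives $N|_{M}=0$; the second, together with $Ez\ge 0$, gives $F(Ez)=0$, i.e. $Ez\in M$, and since $M$ is an ideal this yields $E(M)\subseteq M$. Thus $M$ is invariant under $\mathcal A$ while $F|_{M}=0$. On $L/M$ the induced idempotent $\bar F$ is strictly positive (from $\bar F|x+M|=0$ one gets $F|x|\in\mathcal N(F)$, so $F|x|=F^{2}|x|=0$ and $x\in M$), and $\bar E\bar F-\bar F\bar E=\overline N\ge 0$. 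By Lemma~\ref{LemaOProduktu}(a), applied on $L/M$ with the strictly positive idempotent $\bar F$, we get $\overline N^{\,2}=0$, i.e. $\operatorname{range}(N^{2})\subseteq M$. A computation using only $E^{2}=E$, $F^{2}=F$ (as in the paragraph after the lemmas preceding Theorem~\ref{dim =4,6}) shows $N^{2}$ is central in $\mathcal A$; since $\operatorname{range}(N^{2})\subseteq M\subseteq\ker F$ this forces $FN^{2}=0$, hence $N^{2}F=0$ and $N^{2}EF=EF\cdot N^{2}=E(FN^{2})=0$, and $N^{3}=N\circ N^{2}=0$ because $\operatorname{range}(N^{2})\subseteq M$ and $N|_{M}=0$. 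As $\mathcal A=\operatorname{span}\bigcup_{n\ge 0}\mathcal C_{n}$ with $\mathcal C_{n}=N^{n}\{I,E,F,EF\}$, and $\mathcal C_{n}=0$ for $n\ge 3$ while $\mathcal C_{2}$ reduces to $\{N^{2},N^{2}E\}$, the algebra $\mathcal A$ is spanned by the ten operators
\[
I,\ E,\ F,\ EF,\ N,\ NE,\ NF,\ NEF,\ N^{2},\ N^{2}E .
\]

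For the count I would introduce $\mathcal I=\{T\in\mathcal A:\operatorname{range}(T)\subseteq M\}$, a two–sided ideal of $\mathcal A$ by invariance of $M$. It contains $NF,NEF,N^{2},N^{2}E$ (for $NF$ and $NEF$ one uses $N^{2}F=0$, i.e. $FEFEF=FEF$), and $\mathcal A/\mathcal I$ is the unital algebra generated by $\bar E$ and $\bar F$, hence at most $6$–dimensional by Theorem~\ref{dim =4,6}(a) since $\bar F$ is strictly positive. Thus $\dim\mathcal A=\dim(\mathcal A/\mathcal I)+\dim\mathcal I$, and it suffices to show $\dim\mathcal I\le 3$. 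Writing operators in the $2\times2$ block form along $L=M\oplus M^{d}$ (so $N=\left(\begin{smallmatrix}0&\nu\\ 0&n\end{smallmatrix}\right)$ with $n=\overline N$, $n^{2}=0$) and using the extra identities $\overline N\bar F=0$ and $\bar F\,\overline N=\overline N$ forced by strict positivity of $\bar F$, one analyses the four block images of $NF,NEF,N^{2},N^{2}E$ and shows they span a space of dimension at most $3$; then $\dim\mathcal A\le 6+3=9$.

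The main obstacle is precisely this last step: the naive bound on $\dim\mathcal I$ is $4$, and reducing it to $3$ — equivalently, producing the single linear dependence among the ten operators of the spanning set — is where the block bookkeeping and the full strength of "$\bar F$ strictly positive" (not merely $\overline N^{\,2}=0$) are needed; everything before it is a formal consequence of the method of Lemma~\ref{komutiranje}, of Lemma~\ref{LemaOProduktu}, and of the description of $\mathcal A$ recorded just above the statement.
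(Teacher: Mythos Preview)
Your setup is the same as the paper's: the band $M=\mathcal N(F)$, the block decomposition along $L=M\oplus M^{d}$, the vanishing $E_{21}=0$, the strict positivity of $\bar F$ on $L/M$, and the resulting identities $\bar N^{2}=0$, $\bar N\bar F=0$, $\bar F\bar N=\bar N$. Your derivation of $N^{3}=0$, $N^{2}F=0$ and $N^{2}EF=0$ is correct (the observation that $N^{2}$ is central --- which follows from $EN+NE=N=FN+NF$ --- is a clean shortcut not made explicit in the paper). This legitimately gives the ten--element spanning set
\[
\{I,E,F,EF,N,NE,NF,NEF,N^{2},N^{2}E\}.
\]

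The genuine gap is exactly the step you flag as ``the main obstacle'': you assert that the four elements $NF,NEF,N^{2},N^{2}E$ --- all of which have only a $(1,2)$--block, namely $XF_{22}$, $XE_{22}F_{22}$, $XK$, $XKE_{22}$ --- span a space of dimension at most $3$, but you do not prove it. The identities you invoke, $KF_{22}=0$, $F_{22}K=K$, $K^{2}=0$, $F_{22}E_{22}F_{22}=E_{22}F_{22}$, are purely relations among the $(2,2)$--blocks $E_{22},F_{22}$; they say nothing about $X=E_{11}F_{12}+E_{12}F_{22}-F_{12}E_{22}$, which couples all the off--diagonal data, and there is no algebraic relation among $XF_{22}$, $XE_{22}F_{22}$, $XK$, $XKE_{22}$ derivable from those alone.

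What the paper actually proves at this point is the sharper statement $N^{2}E=0$, i.e.\ $XKE_{22}=0$, and the argument is order--theoretic rather than block--algebraic. One introduces the order ideal $\mathcal J$ of $M^{d}$ generated by $E_{22}\bigl((M^{d})^{+}\bigr)$. Positivity of the $(2,2)$--commutator ($E_{22}F_{22}\ge F_{22}E_{22}$) forces $F_{22}(\mathcal J)\subseteq\mathcal J$, while the idempotency relation $E_{12}=E_{11}E_{12}+E_{12}E_{22}$ yields $E_{11}E_{12}|_{\mathcal J}=0$; combining these gives $E_{11}F_{12}|_{\mathcal J}=E_{11}F_{12}E_{22}|_{\mathcal J}$. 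With these three facts the expansion of $XKE_{22}$ collapses term by term to zero. It is precisely this use of an order ideal inside $M^{d}$ --- not merely the strict positivity of $\bar F$ on the quotient --- that produces the missing linear dependence. Your proposal does not contain this idea, and the final sentence (``one analyses the four block images and shows they span a space of dimension at most $3$'') is where the proof stops rather than where it finishes.

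One smaller point: even if you had shown the four listed elements are linearly dependent, that directly gives $\dim\mathcal A\le 9$ from the ten--element spanning set; the detour through the two--sided ideal $\mathcal I=\{T\in\mathcal A:\operatorname{range}(T)\subseteq M\}$ and the inequality $\dim\mathcal I\le 3$ is unnecessary (and in fact $\mathcal I$ need not be spanned by those four elements, so the argument as phrased does not quite bound $\dim\mathcal I$).
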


\begin {proof}
 Since $F$ is order continuous, the absolute kernel $\mathcal N(F)$ is a band in $L$. The fact that $L$ has the projection property implies that the lattice $L$ can be decomposed as $L=\mathcal N(F)\oplus \mathcal N(F)^d$. With respect to this decomposition the operators $E$ and $F$ can be decomposed as
$$E=\left[\begin {array}{cc}
E_{11} & E_{12}\\
E_{21} & E_{22}
\end {array}\right] \qquad \textrm{and}\qquad F=\left[\begin {array}{cc}
0 & F_{12}\\
0 & F_{22}
\end {array}\right],$$
where the blocks are positive operators between appropriate vector lattices.

We claim that $F_{22}$ is strictly positive. Indeed, let $x\in \mathcal N(F)^d$ be a positive vector such that
$F_{22}x=0$. Using idempotency of $F$ we obtain $F_{12}=F_{12}F_{22}$. Therefore,
$F_{12}x=0$. This implies that the vector
$\left[\begin {array}{c}
0 \\ x
\end {array}
\right]$ lies in $\mathcal N(F)$. So $x=0$ which proves the claim.

Since
$$EF-FE=\left[\begin {array}{cc}
-F_{12}E_{21} & E_{11}F_{12}+E_{12}F_{22}-F_{12}E_{22}\\
-F_{22}E_{21} & E_{21}F_{12}+E_{22}F_{22}-F_{22}E_{22}
\end {array}\right]\geq 0,$$
we have $F_{22}E_{21}=0$, so that strict positivity of $F_{22}$ implies $E_{21}=0$. Therefore we have
$$EF-FE=\left[\begin {array}{cc}
0& E_{11}F_{12}+E_{12}F_{22}-F_{12}E_{22}\\
0 &E_{22}F_{22}-F_{22}E_{22}
\end {array}\right].$$%
Since $F_{22}$ is strictly positive, by Lemmas \ref{LemaOProduktu1} and \ref{LemaOProduktu}(a) we have
\begin{equation}\label{enakosti}
F_{22}E_{22}F_{22}=E_{22}F_{22} \qquad \textrm{and} \qquad (E_{22}F_{22}-F_{22}E_{22})^2=0.
\end{equation}

We claim that the unital algebra generated by $E$ and $F$ is spanned by the set
$$\mathcal F:=\{I,E,F,EF,[E,F],[E,F]E,[E,F]F,[E,F]EF,[E,F]^2\}.$$
For simplicity of further calculations we denote $E_{11}F_{12}+E_{12}F_{22}-F_{12}E_{22}$ and $E_{22}F_{22}-F_{22}E_{22}$ by $X$ and $K$, respectively.
By a direct calculation using (\ref{enakosti}) one can show that we have
$$
[E,F]E=\left[\begin{array}{cc}
0 & XE_{22}\\
0 & KE_{22}
\end {array}\right],\qquad
[E,F]F=\left[\begin{array}{cc}
0 & XF_{22}\\
0 & 0
\end {array}\right]
$$
and
$$
[E,F]EF=\left[\begin{array}{cc}
0 & XE_{22}F_{22}\\
0 & KE_{22}F_{22}
\end {array}\right].
$$
From the identity $E_{22}F_{22}=K+F_{22}E_{22}$ we obtain
$KE_{22}F_{22}=K^2+KF_{22}E_{22},$ and again by applying (\ref{enakosti})
we actually have
$$
[E,F]EF=\left[\begin{array}{cc}
0 & XE_{22}F_{22}\\
0 & 0
\end {array}\right].
$$
Similarly, we have
$$
[E,F]^2=\left[\begin{array}{cc}
0 & XK\\
0 & 0
\end {array}\right], \qquad
[E,F]^2E=\left[\begin{array}{cc}
0 & XKE_{22}\\
0 & 0
\end {array}\right],
$$
$$
[E,F]^2F=\left[\begin{array}{cc}
0 & XKF_{22}\\
0 & 0
\end {array}\right] \qquad \textrm{and} \qquad
[E,F]^2EF=\left[\begin{array}{cc}
0 & XKE_{22}F_{22}\\
0 & 0
\end {array}\right].
$$
By (\ref{enakosti})
we have $[E,F]^2F=[E,F]^2EF=0$. It is obvious that  $[E,F]^3=0$.

To show that $[E,F]^2E$ is also zero, let $\mathcal{I}$ be the order ideal generated by $E_{22}((\mathcal{N}(F)^d)^+)$. Since the commutator $E_{22}F_{22}-F_{22}E_{22}$ is positive, we have $E_{22}F_{22}x\ge F_{22}E_{22}x$ for each positive vector $x$. Since an arbitrary element of $E_{22}((\mathcal{N}(F)^d)^+)$ is of the form $E_{22}x$ for some positive vector $x\in \mathcal{N}(F)^d$ and since $E_{22}F_{22}x\in E_{22}((\mathcal{N}(F)^d)^+)$ by definition, we get $F_{22}(E_{22}((\mathcal{N}(F)^d)^+))\subseteq \mathcal{I}$, and consequently,
\begin{equation}\label{F_{22}(I)}
F_{22}(\mathcal{I})\subseteq \mathcal{I}.
\end{equation}
Next, idempotency of the operators $F$ and $E$ implies $F_{12}=F_{12}F_{22}$ and $E_{12}=E_{11}E_{12}+E_{12}E_{22}$, so $E_{11}E_{12}E_{22}=0$. In particular,
\begin{equation}\label{E_{11}E_{12}(I)}
E_{11}E_{12}|_{\mathcal{I}}=0.
\end{equation}
Since the ideal $\mathcal I$ is invariant under $F_{22}$ by (\ref{F_{22}(I)}),
using (\ref{E_{11}E_{12}(I)}) we obtain $E_{11}E_{12}F_{22}E_{22}=0$.
Clearly, $E_{11}XE_{22}=E_{11}E_{12}F_{22}E_{22}$, so that
$E_{11}X|_{\mathcal{I}}=0$.
Using (\ref{F_{22}(I)}) and (\ref{E_{11}E_{12}(I)}) on the latter equality
yields
\begin{equation}\label{zozitve na I}
E_{11}F_{12}|_{\mathcal{I}}=E_{11}F_{12}E_{22}|_{\mathcal{I}}.
\end{equation}
Using the equality $E_{12}=E_{11}E_{12}+E_{12}E_{22}$ we compute
\begin{eqnarray*}
XKE_{22}&=&(E_{11}F_{12}+E_{12}F_{22}-F_{12}E_{22})(E_{22}F_{22}-F_{22}E_{22})E_{22}\\
&=&(E_{11}F_{12}E_{22})F_{22}E_{22}-(E_{11}F_{12})F_{22}E_{22}\\
&+&E_{11}E_{12}F_{22}(E_{22}F_{22}-F_{22}E_{22})E_{22}+E_{12}(E_{22}F_{22}E_{22}F_{22}E_{22}-E_{22}F_{22}E_{22})\\
&-&F_{12}E_{22}(E_{22}F_{22}-F_{22}E_{22})E_{22}.
\end{eqnarray*}
Using (\ref{F_{22}(I)}) and (\ref{zozitve na I}) we get
$$(E_{11}F_{12}E_{22})F_{22}E_{22}-(E_{11}F_{12})F_{22}E_{22}=0.$$
By (\ref{F_{22}(I)}) the ideal $\mathcal{I}$ is invariant under $F_{22}$, and since $E_{22}$ is idempotent, $\mathcal{I}$
is also invariant under $E_{22}$. Therefore (\ref{E_{11}E_{12}(I)}) implies
$$E_{11}E_{12}F_{22}(E_{22}F_{22}-F_{22}E_{22})E_{22}=0.$$
Next,
$$E_{12}(E_{22}F_{22}E_{22}F_{22}E_{22}-E_{22}F_{22}E_{22})=0$$
by (\ref{enakosti}). Finally, the identity
$$F_{12}E_{22}(E_{22}F_{22}-F_{22}E_{22})E_{22}=0$$
should be obvious. Therefore $[E,F]^2E=0$.

Since the algebra $\mathcal A$ is spanned by $\bigcup\limits_{n=0}^\infty\mathcal C_n$,
the fact that we have $[E,F]^3=0$
implies that the algebra $\mathcal A$ is spanned by $\mathcal C_0\cup \mathcal C_1\cup \mathcal C_2.$ Finally, by applying $[E,F]^2E=[E,F]^2F=[E,F]^2EF=0$ we conclude that the algebra $\mathcal A$ is spanned by the set $\mathcal F$. This immediately implies that the dimension of $\mathcal A$ is at most $9$.
\end {proof}

Since the space $\mathbb R^n$ ordered componentwise has the projection property, and matrices are order continuous on $\mathbb R^n$, we get an immediate corollary for matrices.

\begin {corollary}\label{122}
Let $E$ and $F$ be positive idempotent $n\times n$ matrices with a positive commutator $EF-FE$. Then the dimension of the unital algebra generated by $E$ and $F$ is at most $9$.
\end {corollary}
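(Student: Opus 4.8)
The plan is to deduce Corollary~\ref{122} directly from Theorem~\ref{12} by verifying that the finite-dimensional lattice $\mathbb R^n$, ordered componentwise, meets the hypotheses of that theorem. First I would recall from Section~2 that $\mathbb R^n$ with the componentwise ordering is a vector lattice with the projection property: every band in $\mathbb R^n$ is a standard subspace, and every standard subspace is a projection band. Second, I would check that every $n\times n$ matrix $F$, viewed as a linear operator on $\mathbb R^n$, is order continuous. If $\{x_\alpha\}_\alpha$ is a net decreasing to $0$, then each coordinate decreases to $0$ in $\mathbb R$, so $x_\alpha\to 0$ in norm; since $F$ is norm-continuous and $\{Fx_\alpha\}_\alpha$ is again a decreasing net (because $F\geq 0$), its infimum must be $0$. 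Hence $F$ is order continuous.

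With these two observations, the positive idempotent matrices $E$ and $F$ with $EF-FE\geq 0$ satisfy every hypothesis of Theorem~\ref{12} with $L=\mathbb R^n$, and that theorem immediately yields that the unital algebra generated by $E$ and $F$ is at most $9$-dimensional, which is exactly the assertion of the corollary.

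The only step requiring even a word of justification is the order-continuity of $F$, and as indicated above this is automatic in finite dimensions, so I do not anticipate a genuine obstacle. For completeness one could instead give a self-contained matrix argument by rerunning the proof of Theorem~\ref{12} with the absolute kernel $\mathcal N(F)=\ker F$, which is a standard subspace and hence a projection band of $\mathbb R^n$; but invoking the general theorem is shorter and cleaner, and that is the route I would take.
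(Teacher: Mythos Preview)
Your proposal is correct and follows exactly the same route as the paper: the paper's proof of Corollary~\ref{122} consists of a single sentence observing that $\mathbb{R}^n$ ordered componentwise has the projection property and that matrices are order continuous on $\mathbb{R}^n$, and then invokes Theorem~\ref{12}. You have simply spelled out those two verifications in slightly more detail, so there is nothing to add.
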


As the following example shows the upper bound in Theorem \ref{12} can be obtained.

\begin {example}\label{9max}
Let
$$E=\left[\begin {array}{ccccccc}
1& 0& 1& 0& 0& 0& 0\\
0& 0& 0& 0& 0& 1& 0\\
0& 0& 0& 0& 0& 0& 0\\
0& 0& 0& 0& 0& 0& 0\\
0& 0& 0& 0& 1& 0& 0\\
0& 0& 0& 0& 0& 1& 0\\
0& 0& 0& 0& 0& 1& 0
\end {array}\right] \qquad \textrm{and}\qquad
F=\left[\begin {array}{ccccccc}
0& 0& 0& 0& 0& 0& 0\\
0& 0& 0& 0& 0& 0& 0\\
0& 0& 1& 0& 0& 0& 0\\
0& 0& 0& 0& 0& 0& 0\\
0& 0& 0& 0& 0& 0& 0\\
0& 0& 0& 1& 1& 1& 0\\
0& 0& 0& 0& 0& 0& 1
\end {array}\right].
$$
A direct calculation shows that the commutator $EF-FE$ is positive, and that the set
$$\{I,E,F,EF,[E,F],[E,F]E,[E,F]F,[E,F]EF,[E,F]^2\}$$ which spans the unital algebra generated by $E$ and $F$ is linearly independent.
\end {example}

If we only assume that  idempotent matrices $E$ and $F$ satisfy $EF\geq FE\geq 0$, then their commutator $EF-FE$ is nilpotent by \cite[Theorem 2.3]{BDFRZ}. Therefore we study also idempotents with nilpotent commutators without any positivity assumptions.
The upper bound for the algebra generated by two  idempotent elements of an associative algebra with nilpotent commutator is closely related to the nil-index of their commutator.

\begin{theorem}\label{nilkomutator}
If idempotents $E$ and $F$ in an associative algebra satisfy $(EF-FE)^k=0$ for some $k\in \mathbb N$, then the dimension of the unital algebra generated by $E$ and $F$ is at most $4k$.
\end{theorem}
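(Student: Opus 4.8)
The plan is to cash in on the combinatorial bookkeeping that was set up just before Theorem~\ref{12}, after noticing that none of it actually used positivity. Recall that $\mathcal{A}$ denotes the unital algebra generated by $E$ and $F$, that $\mathcal{C}_0=\{I,E,F,EF\}$, and that $\mathcal{C}_n=[E,F]\cdot\mathcal{C}_{n-1}$ for $n\ge 1$. The first step is to record that the remark following Lemma~\ref{GN} gives $\mathcal{A}=\spam\bigcup_{n=0}^{\infty}\mathcal{C}_n$, and that this holds in an arbitrary associative algebra: the definitions of $\mathcal{F}_n$, $\mathcal{V}_m$, $\mathcal{G}_m$ and the proof of Lemma~\ref{GN} consist only of formal manipulations of words in $E$ and $F$, using nothing beyond $E^2=E$, $F^2=F$ and the definition of the commutator, so they carry over with no order structure present.

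The second step is a one-line induction showing $\mathcal{C}_n=[E,F]^n\cdot\mathcal{C}_0=\{[E,F]^n,\,[E,F]^nE,\,[E,F]^nF,\,[E,F]^nEF\}$ for every $n$. Hence the hypothesis $(EF-FE)^k=0$ forces $\mathcal{C}_n=\{0\}$ whenever $n\ge k$, so that $\bigcup_{n=0}^{\infty}\mathcal{C}_n=\bigcup_{n=0}^{k-1}\mathcal{C}_n$. This last set has at most $4k$ elements, at most four for each $n\in\{0,1,\dots,k-1\}$, and by the first step it spans $\mathcal{A}$; therefore $\dim\mathcal{A}\le 4k$ and the theorem follows.

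The only point that needs care, and the main (if mild) obstacle, is the first step: one must check honestly that Lemma~\ref{GN} and the surrounding definitions do not secretly rely on positivity or on $L$ being a vector lattice. Re-reading them confirms that they are statements about the algebra freely generated by two idempotents modulo $E^2=E$ and $F^2=F$, so they transfer verbatim; once that is in place no further computation is required.
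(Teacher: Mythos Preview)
Your proof is correct and takes a somewhat different route from the paper's. You invoke Lemma~\ref{GN} and its consequence that $\mathcal{A}=\spam\bigcup_{n\ge 0}\mathcal{C}_n$, then observe that $(EF-FE)^k=0$ kills $\mathcal{C}_n$ for $n\ge k$, leaving the $4k$-element spanning set $\bigcup_{n=0}^{k-1}\mathcal{C}_n$. The paper instead works directly with the word-length filtration $\mathcal{V}_m$: it expands $(EF-FE)^k=0$ as $(EF)^k=(-1)^{k+1}(FE)^k+s_{k-1}(E,F)$ with $s_{k-1}$ a sum of words of length at most $2k-1$, uses this to show $\mathcal{V}_{2k}=\mathcal{V}_{2k+1}$ and hence $\mathcal{V}_{2k}=\mathcal{A}$, and counts the spanning set $\{I,(FE)^k\}\cup\bigcup_{n=1}^{2k-1}\mathcal{F}_n$ as $2+2(2k-1)=4k$ elements.

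Your argument is tidier, since the nilpotency hypothesis plugs straight into the $\mathcal{C}_n$ machinery without any further rewriting of words; the paper's argument is self-contained, not relying on Lemma~\ref{GN}. You are also right that neither Lemma~\ref{GN} nor the remark following it uses any order structure --- the whole setup lives in the free unital algebra on two idempotents --- so your transfer to an arbitrary associative algebra is legitimate.
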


\begin {proof}
Recall that we denoted by $\mathcal A$ and $\mathcal F_n$ the unital algebra generated by $E$ and $F$ and the set of all words in $E$ and $F$ of length $n$, respectively.

We first prove that $\mathcal{V}_{2k}=\mathcal{V}_{2k+1}$. Since $\mathcal V_{2k+m_1}\subseteq \mathcal V_{2k+m_2}$ whenever $0\leq m_1\leq m_2$, we conclude that
$\mathcal V_{2k}\subseteq \mathcal V_{2k+1}$. To prove the opposite inclusion we need to see that $\mathcal F_{2k+1}\subseteq \mathcal V_{2k}.$
After expanding the identity $(EF-FE)^k=0$ and rearranging its terms, we can write
\begin {equation}\label{razpis}
(EF)^k=(-1)^{k+1}(FE)^k+s_{k-1}(E,F)
\end {equation}
 where
$s_{k-1}(E,F)$ is a sum of some words in $E$ and $F$ of length at most $2k-1$. By multiplying the identity (\ref{razpis}) by $F$ on the left-hand side we obtain
$$(FE)^kF=F(EF)^k=(-1)^{k+1}(FE)^k+F\,s_{k-1}(E,F).$$ Since $F\,s_{k-1}(E,F)$ is  a sum of some words in $E$ and $F$ of length at most $2k$, we have $(FE)^kF\in \mathcal V_{2k}.$ Similarly we can see $(EF)^kE\in \mathcal V_{2k}$, so that $\mathcal V_{2k}=\mathcal V_{2k+1}.$

We prove by induction that  $\mathcal V_{2k}=\mathcal V_{2k+m}$ for all $m\in\mathbb N_0.$ For $m=0$ the equality obviously holds. Suppose now that for some $m\in \mathbb N_0$ we have $\mathcal V_{2k}=\mathcal V_{2k+m}$. Then $\mathcal V_{2k}=\mathcal V_{2k+j}$ for every $0\leq j\leq m$, so that
$$\mathcal F_{2k+m+1}\subseteq \mathcal V_{2k+1} \cdot \mathcal F_m\subseteq
\mathcal V_{2k} \cdot \mathcal V_m\subseteq \mathcal V_{2k+m}=\mathcal V_{2k}.$$
By the definition of $\mathcal V_{2k+m+1}$ we therefore have $\mathcal V_{2k+m+1}\subseteq \mathcal V_{2k}$ which finishes the induction step.
Since the equality (\ref{razpis}) implies that the set $\{I,(FE)^k\}\cup  \bigcup\limits_{n=1}^{2k-1}\mathcal F_n$ spans $\mathcal A$, the dimension of $\mathcal A$ is at most $2+2(2k-1)=4k.$
\end {proof}

The above theorem immediately implies that a unital algebra generated by two simultaneously triangularizable idempotent $n\times n$ matrices is at most $4n$-dimensional. However, this bound is not best possible. Gaines, Laffey and Shapiro \cite{GainesLaffeyShapiro} proved the following more general result.

\begin {theorem}[Gaines, Laffey, Shapiro]
A unital algebra generated by two $n\times n$ matrices with quadratic minimal polynomials is at most $2n$-dimensional if $n$ is even, and at most $(2n-1)$-dimensional if $n$ is odd.
\end {theorem}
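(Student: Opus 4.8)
The plan is to reduce to the case of two idempotents and then to use the classical identification of a pair of idempotents with a representation of the infinite dihedral group $D_\infty$. First, replacing $A$ by $\alpha A+\beta I$ for suitable scalars $\alpha\neq0,\beta$ does not affect the generated unital algebra and makes each of $A,B$ either idempotent (if the minimal polynomial has distinct roots) or square-zero (if it has a double root). The square-zero and mixed cases can be obtained from the idempotent case by deformation — a square-zero matrix is a limit of matrices with spectrum $\{0,\varepsilon\}$, which after rescaling are idempotent, and the dimension of the algebra generated by a pair of matrices is lower semicontinuous — together with a separate but entirely analogous direct analysis for the odd-$n$ refinement. So assume $A=E$, $B=F$ are idempotent; put $s=2E-I$, $t=2F-I$, $g=st$. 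Then $s^2=t^2=I$, $g$ is invertible with $sgs^{-1}=g^{-1}$, and the unital algebra $\mathcal A$ generated by $E,F$ is generated by $s$ and $g$.

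Next I would decompose $\mathbb C^n$ into the generalized eigenspaces $W_\lambda$ of $g$. Since $sgs^{-1}=g^{-1}$, the subspace $W_\lambda\oplus W_{\lambda^{-1}}$ is invariant under $s$ and $g$ for every $\lambda$, with $s$ swapping the two summands when $\lambda\neq\pm1$; so $\mathbb C^n$ is a direct sum of $\{s,g\}$-invariant blocks, each either of the form $W_\lambda\oplus W_{\lambda^{-1}}$ with $\lambda\neq\pm1$, or equal to $W_1$, or equal to $W_{-1}$. On every block $V$ one has $\mathcal A|_V\subseteq\mathbb C[g|_V]+s\,\mathbb C[g|_V]$, since this is an algebra containing $s|_V$ and $g|_V$ (closure uses $s(g|_V)s=(g|_V)^{-1}$). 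On a block $V=W_\lambda\oplus W_{\lambda^{-1}}$ one transports a basis of $W_\lambda$ to $W_{\lambda^{-1}}$ by $s$, so that $g|_V$ becomes block-diagonal of the form $g_0\oplus g_0^{-1}$; since $\lambda\neq\lambda^{-1}$, the minimal polynomial of $g_0\oplus g_0^{-1}$ has degree $2m$, where $m$ is the largest Jordan block size of $g_0$, so $\dim\mathcal A|_V\le 2\cdot2m=4m\le 2\dim V$. On a block $V=W_{\pm1}$ write $g|_V=\pm(I+N)$ with $N$ nilpotent of index $r$; then $\dim\mathcal A|_V\le 2\dim\mathbb C[N]=2r$, but one can do better: from $(I+N)^{-1}=I-N+N^2-\cdots$ it follows that $sNs$ and $N$ have the same leading term $\mp N$, hence $N^T$ and $(sNs)^T$ have the same kernel, which is therefore $s^T$-invariant; this forces $N^{r-1}s$ to be a scalar multiple of $N^{r-1}$, so $\mathbb C[N]\cap\mathbb C[N]s\neq\{0\}$ and $\dim\mathcal A|_V\le 2r-1\le 2\dim V-1$.

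Summing the per-block bounds gives
$$\dim\mathcal A\;\le\;\sum_V\dim\mathcal A|_V\;\le\;2n-\#\{\,V:\ V=W_1\ \text{or}\ V=W_{-1}\,\}.$$
If $n$ is even this already yields $\dim\mathcal A\le 2n$. If $n$ is odd, then since each block of the first type has even dimension and together with $\dim W_1+\dim W_{-1}$ they sum to $n$, the number $\dim W_1+\dim W_{-1}$ is odd, so at least one of $W_1,W_{-1}$ is nonzero; hence there is at least one block of type $W_{\pm1}$ and $\dim\mathcal A\le 2n-1$.

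I expect the main obstacle to be the sharp estimate on the eigenvalue-$\pm1$ blocks: establishing $\mathbb C[N]\cap\mathbb C[N]s\neq\{0\}$ is precisely what produces the extra $-1$ responsible for the odd case, and it has to be extracted from the precise interaction between the involution $s$ and the nilpotent $N=\pm g|_V-I$. A secondary loose end is making the square-zero and mixed cases airtight, since the deformation argument only delivers the bound $2n$; for odd $n$ one must rerun the block analysis there, which should be routine because the relevant matrices have rank at most $n/2$ and so words in $A$ and $B$ collapse quickly.
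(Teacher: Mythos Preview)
The paper does not prove this theorem; it is simply quoted from \cite{GainesLaffeyShapiro} without argument. So there is no in-paper proof to compare against, and your proposal has to stand on its own.

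Your dihedral reduction for the idempotent case is the standard one, and the block decomposition via the generalized eigenspaces of $g=st$ together with the bound $\dim\mathcal A|_V\le 2\dim\mathbb C[g|_V]$ is correct. The genuine gap is in the analysis of a block $V=W_{\pm1}$. You assert that $N^{r-1}s$ is a scalar multiple of $N^{r-1}$, hence $\mathbb C[N]\cap\mathbb C[N]s\neq\{0\}$ and $\dim\mathcal A|_V\le 2r-1$. This is false when $N$ has more than one Jordan block of the maximal size $r$. Take $V=\mathbb C^4$, $N=J_2\oplus J_2$ (so $r=2$) and
\[
s=\begin{pmatrix}0&0&1&0\\0&0&0&-1\\1&0&0&0\\0&-1&0&0\end{pmatrix}.
\]
Then $s^2=I$ and $sNs=-N$ (so $s(I+N)s=(I+N)^{-1}$), but $Ns=-e_1e_4^T-e_3e_2^T\notin\mathbb C[N]=\spam\{I,N\}$, and one checks directly that $\mathbb C[N]\cap\mathbb C[N]s=\{0\}$; thus $\dim(\mathbb C[N]+\mathbb C[N]s)=4=2r$, not $2r-1$. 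The step ``$\ker N^T$ is $s^T$-invariant, hence $N^{r-1}s\in\mathbb C N^{r-1}$'' is where the argument breaks: $s$-invariance of $\mathrm{im}\,N$ only says that $s$ descends to $V/\ker N^{r-1}$, not that it acts there as a scalar, and that quotient has dimension $>1$ precisely when $N$ has several maximal blocks.

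The repair is a one-line case split. If $r<\dim V$ then already $\dim\mathcal A|_V\le 2r\le 2\dim V-2$. If $r=\dim V$ then $N$ is a single Jordan block, $V/\ker N^{r-1}$ is one-dimensional, and your argument does give $N^{r-1}s\in\mathbb C N^{r-1}$, hence $\dim\mathcal A|_V\le 2r-1=2\dim V-1$. Either way $\dim\mathcal A|_V\le 2\dim V-1$, and the rest of your summation goes through unchanged.

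One smaller remark: your hedging about the deformation step is unnecessary. The dimension of the unital algebra generated by $(A,B)$ is the rank of a matrix whose entries are polynomials in the entries of $A$ and $B$, hence lower semicontinuous; the bound $2n-1$ for odd $n$ therefore passes to the limit exactly as $2n$ does, and no separate treatment of the square-zero or mixed cases is needed.
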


The upper-bound $2n-1$ can be attained (for each $n$) even if we additionally assume that the matrices are idempotent and simultaneously triangularizable. Indeed, let
$$E=\left[ \begin{array}{ccccc}
1\\
& 0\\
& & 1\\
& & & 0\\
& & & & \ddots
\end{array}
\right]$$ and $F$ be an $n\times n$ matrix defined by the following properties:
\begin{itemize}
  \item The diagonal equals $(1,0,1,0,1,\ldots)$.
  \item The first super-diagonal consists of all ones.
  \item The $2j$-th super-diagonal equals $(-C_{j-1},C_{j-1},-C_{j-1},\ldots)$ where $C_{j-1}=\frac{1}{j}{2j-2\choose j-1}$ is the $(j-1)$-th Catalan number for $j=1,\ldots,\lfloor \frac{n-1}{2}\rfloor$.
  \item Other entries are zero.
\end{itemize}
It can be verified that $F$ is an idempotent and that the commutator $EF-FE$ has zero entries everywhere except on the first super-diagonal which equals $(1,-1,1,-1,1\ldots)$. Therefore the matrices $$I,E,[E,F],[E,F]E,[E,F]^2,[E,F]^2E,\ldots ,[E,F]^{n-1}$$
are linearly independent.
It can be also verified that
$$F=E+[E,F]-2[E,F]E+\sum_{j=1}^{\lfloor \frac{n}{2}\rfloor}(-1)^{j-1}C_{j-1}[E,F]^{2j}(2E-I)$$ and
$$EF=E+[E,F]-[E,F]E+\sum_{j=1}^{\lfloor \frac{n}{2}\rfloor}(-1)^{j-1}C_{j-1}[E,F]^{2j}E.$$ Therefore,
the unital algebra generated by $E$ and $F$ is $(2n-1)$-dimensional, by Lemma \ref{GN}. We do not know whether the bound $2n$ can be obtained for even $n$.

Combining \cite{GainesLaffeyShapiro} with \cite[Theorem 2.3]{BDFRZ} and \cite[Theorem 2.3.10]{RadRos} we obtain the following corollary. For the original proof of \cite[Theorem 2.3.10]{RadRos} and its extension to quadratic operators we refer the reader to \cite{Szep} and \cite{RadRos2}, respectively.

\begin {corollary}\label{4n drzi}
Let $E$ and $F$ be real idempotent $n\times n$ matrices that satisfy $EF\geq FE\geq 0$. Then the dimension of the unital algebra generated by $E$ and $F$ is at most $2n$ for even $n$ and $2n-1$ for odd $n$, and the pair $\{E,F\}$ is triangularizable.
\end {corollary}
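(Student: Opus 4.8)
\noindent\emph{Proof plan.} The plan is to assemble the statement from three results already available to us, so the real task is to verify that their hypotheses hold. For the dimension bound I would first observe that $E^2=E$ and $F^2=F$ force $E$ and $F$ to satisfy the polynomial $x^2-x$; hence their minimal polynomials divide $x(x-1)$ and have degree at most $2$. If neither $E$ nor $F$ equals $0$ or $I$, both minimal polynomials are genuinely quadratic, and the theorem of Gaines, Laffey and Shapiro quoted above applies directly and gives that the unital algebra generated by $E$ and $F$ is at most $2n$-dimensional when $n$ is even and at most $(2n-1)$-dimensional when $n$ is odd. The remaining cases (one of $E,F$ equal to $0$ or $I$, or $n=1$) are trivial, since then the algebra is generated by a single idempotent together with $I$ and has dimension at most $2$, which lies within the claimed bounds.

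For the triangularizability assertion the order hypothesis $EF\ge FE\ge 0$ comes into play. First I would invoke \cite[Theorem 2.3]{BDFRZ}, which applies to exactly this situation, to conclude that the commutator $EF-FE$ is nilpotent; this is the one place where the assumption $EF\ge FE\ge 0$ (rather than positivity of $E$ and $F$ individually) is used. Then, having a pair of idempotents with nilpotent commutator, I would apply \cite[Theorem 2.3.10]{RadRos}, which in the case relevant here asserts that a pair of idempotents is simultaneously triangularizable precisely when its commutator is nilpotent. This yields that $\{E,F\}$ is triangularizable and completes the argument.

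Two small points need checking rather than presenting genuine difficulty. Triangularizability should be read over $\mathbb{R}$: the only eigenvalues of the idempotents $E$ and $F$ are $0$ and $1$, and the nilpotent $EF-FE$ has only the eigenvalue $0$, so every eigenvalue entering a simultaneous triangularization is real, and a triangularization over $\mathbb{C}$ can be carried out over $\mathbb{R}$. The only genuinely non-formal input is \cite[Theorem 2.3.10]{RadRos}; the hard part in a self-contained treatment would be to reprove that a pair of idempotents with nilpotent commutator can be simultaneously triangularized, but since we may cite that result, the proof reduces to the hypothesis-checking described above.
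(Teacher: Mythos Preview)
Your proposal is correct and follows exactly the paper's approach: the corollary is stated in the paper as an immediate consequence of combining \cite{GainesLaffeyShapiro} (for the dimension bound), \cite[Theorem 2.3]{BDFRZ} (for nilpotency of $EF-FE$ under the hypothesis $EF\ge FE\ge 0$), and \cite[Theorem 2.3.10]{RadRos} (for triangularizability of idempotents with nilpotent commutator). Your write-up is in fact more detailed than the paper's one-line justification, in that you separate out the degenerate cases for Gaines--Laffey--Shapiro and comment on the $\mathbb{R}$ versus $\mathbb{C}$ issue for triangularizability.
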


The dimension of the unital algebra generated by positive idempotent $n\times n$ matrices $E$ and $F$ with a positive commutator can be $2n-1$ when $n=3$. Indeed,
if
$$E=\left[\begin {array}{ccc}
0&1&0\\
0&1&0\\
0&0&0\end {array}
\right] \qquad \textrm{and}\qquad
F=\left[\begin {array}{ccc}
0&0&0\\
0&1&1\\
0&0&0\end {array}
\right],$$ then
$$EF-FE=\left[\begin {array}{ccc}
0&1&1\\
0&0&1\\
0&0&0\end {array}
\right].$$ Now it is not hard to see that
$I,E,[E,F],[E,F]E$ and $[E,F]^2$ are linearly independent.

For general $n$ we do not know whether the unital algebra generated by real idempotent $n\times n$ matrices $E$ and $F$ with $EF\geq FE\geq 0$ can be $2n$-dimensional for even $n$ and $2n-1$-dimensional for odd $n$.

\begin {question}
What is the precise upper bound for the dimension of the unital algebra generated by real idempotent $n\times n$ matrices $E$ and $F$ which satisfy $EF\geq FE\geq 0.$
\end {question}

However, in infinite dimensions the algebra generated by two simultaneously triangularizable idempotents can be infinite-dimensional as the following example shows.

\begin{example}
Let $V$ be the Volterra operator on $L^2[0,1]$ defined by
$$(Vf)(x)=\int_0^x f(t)dt.$$ By a direct calculation one can verify that the operators
$$E=\left[\begin {array}{cc}
I & 0\\
0 & 0
\end {array}
\right]\qquad \textrm{and}\qquad
F=\left[\begin {array}{cc}
V & V\\
I-V & I-V
\end {array}
\right]$$
are idempotents
on $L^2[0,1]\oplus L^2[0,1].$
The operator $V$ is quasinilpotent by \cite[Example 7.2.5]{RadRos}. On the other hand, it is not nilpotent since the norm of $V^n$ can be bounded from below by $\frac{1}{2n!}$. The proof of the latter fact can be found in \cite{Kershaw}, see also \cite{Eveson} for its generalization. Consequently, the Volterra operator is not algebraic.

The operators $E$ and $F$ are simultaneously triangularizable in the sense of Banach spaces  by \cite[Example 2]{RadRos2}. The commutator $$EF-FE=\left[\begin {array}{cc}
0 & V\\
V-I & 0
\end {array}
\right]$$ is not nilpotent since
$$(EF-FE)^{2n}=\left[\begin {array}{cc}
(V^2-V)^n & 0\\
0 & (V^2-V)^n
\end {array}
\right]$$
and $V$ is not algebraic.

Since
$$(EF)^n=\left[\begin {array}{cc}
V^n & V^n\\
0 & 0
\end {array}
\right]$$ and $V$ is not algebraic,
the unital algebra generated by $E$ and $F$ is infinite-dimensional.
\end{example}

We conclude this section with the following interesting observation.

\begin {proposition}
Let $E$ and $F$ be simultaneously triangularizable idempotent $n\times n$ matrices. Then there exists a basis of $\mathbb C^n$ such that in this basis $E$ is diagonal and $F$ is upper-triangular.
\end {proposition}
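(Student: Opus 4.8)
The plan is to first invoke the hypothesis directly: there is an invertible matrix $S$ with $S^{-1}ES$ and $S^{-1}FS$ both upper-triangular, so after replacing $E$ and $F$ by these conjugates we may assume from the outset that $E$ and $F$ are upper-triangular with respect to the standard basis $e_1,\dots,e_n$ of $\mathbb{C}^n$. In particular every subspace $V_i=\spam(e_1,\dots,e_i)$ of the standard flag is invariant under both $E$ and $F$. The goal is then to produce a new basis $f_1,\dots,f_n$ \emph{adapted to this flag}, meaning $\spam(f_1,\dots,f_i)=V_i$ for every $i$, with the additional property that every $f_i$ is an eigenvector of $E$. Such a basis change has upper-triangular transition matrix $P$ (its columns are the $f_i$), so $P^{-1}FP$ stays upper-triangular because each $V_i$ is $F$-invariant, while $P^{-1}EP$ becomes diagonal because the columns of $P$ are $E$-eigenvectors; composing $P$ with $S$ then produces the desired basis of $\mathbb{C}^n$.

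The vectors $f_i$ are constructed by induction on $i$. Since $E$ is upper-triangular and idempotent, each diagonal entry $e_{ii}$ equals $0$ or $1$ and $EV_i\subseteq V_i$. If $e_{ii}=1$, set $f_i=Ee_i$; then $Ef_i=E^2e_i=Ee_i=f_i$, and the $i$-th coordinate of $f_i$ is $e_{ii}=1$, so $f_i\in V_i\setminus V_{i-1}$. If $e_{ii}=0$, then $Ee_i\in V_{i-1}$, and we set $f_i=(I-E)e_i$; then $Ef_i=Ee_i-E^2e_i=0$, and again the $i$-th coordinate of $f_i$ equals $1$. In both cases $f_i\equiv e_i\pmod{V_{i-1}}$, so $\spam(f_1,\dots,f_i)=V_i$ as the induction requires, and moreover $P$ is upper-triangular with all diagonal entries equal to $1$, hence invertible.

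I do not expect a serious obstacle: the only point to check with any care is that in both cases the chosen $f_i$ is congruent to $e_i$ modulo $V_{i-1}$, which is precisely what keeps $P$ invertible, and the remaining verifications are routine. It is worth recording where idempotency is used — exactly to guarantee that $Ee_i$ is fixed by $E$ when $e_{ii}=1$ and that $(I-E)e_i$ is annihilated by $E$ when $e_{ii}=0$; for a general upper-triangular $E$ there is no reason that flag-compatible eigenvectors exist, so the statement genuinely relies on $E^2=E$.
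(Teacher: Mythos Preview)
Your proof is correct and takes a genuinely different, more direct route than the paper. The paper works block-wise: it groups consecutive equal diagonal entries of $E$ into blocks (each $E_{ii}$ being $0$ or $I$), then eliminates the off-diagonal blocks of $E$ one block-super-diagonal at a time via a sequence of explicit upper-triangular conjugations, using idempotency at each step to gain an extra vanishing super-diagonal. You instead exhibit a single conjugation at once: the flag-adapted vectors $f_i=Ee_i$ (when $e_{ii}=1$) or $f_i=(I-E)e_i$ (when $e_{ii}=0$) form an $E$-eigenbasis directly, and the transition matrix is unit upper-triangular so $F$ stays upper-triangular. Your approach is shorter and more conceptual; the paper's iterative scheme is more computational but makes the block structure explicit. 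One cosmetic remark: the phrase ``by induction on $i$'' is unnecessary, since each $f_i$ is defined independently of the others --- the flag property $\spam(f_1,\dots,f_i)=V_i$ follows immediately from $f_i\equiv e_i\pmod{V_{i-1}}$ without any inductive step.
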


\begin {proof}
Without any loss of generality we may assume that $E$ and $F$ are already upper-triangular. Since $E$ is an idempotent, its diagonal entries are either zero or one. Therefore, we can write
$$E=\left[\begin{array}{cccc}
E_{11} & E_{12} &\cdots & E_{1k}\\
0 & E_{22} & \cdots & E_{2k}\\
\vdots & & \ddots &\vdots \\
0& \cdots & 0&E_{kk}
\end{array}\right] \qquad \textrm{and}\qquad
F=\left[\begin{array}{cccc}
F_{11} & F_{12} &\cdots & F_{1k}\\
0 & F_{22} & \cdots & F_{2k}\\
\vdots & & \ddots &\vdots \\
0& \cdots & 0&F_{kk}
\end{array}\right]$$
for some $1\leq k\leq n$ where the spectrum of each $E_{ii}$ is either $\{0\}$ or $\{1\}$, and the spectra of two consecutive diagonal blocks of $E$ are disjoint.  Since every idempotent matrix with spectrum $\{0\}$ or $\{1\}$ is the zero matrix or the identity matrix, respectively, the matrices $E_{ii}$ are either zero or identity matrices of appropriate sizes. By exchanging $E$ by $I-E$ (if needed) we can without any loss of generality assume that $E_{11}$ is the identity matrix.
Let us denote
$$P=\left[
\begin {array}{ccccc}
I & E_{12} \\
& I &-E_{23} \\
& & I &E_{34}\\
& & & I & \ddots \\
& & & & \ddots
\end {array}
\right].$$
Since $E$, $F$ and $P$ are upper-triangular, so are $PEP^{-1}$ and $PFP^{-1}.$ Furthermore,  the diagonal of the matrix $PEP^{-1}$ is the same as the diagonal of $E$, while the first super-diagonal of $PEP^{-1}$ is zero. If $k\geq 3$, then the idempotency of $E$ implies that the second super-diagonal of $PEP^{-1}$ is zero.
Therefore, we may assume from the start that the first two super-diagonals of $E$ are zero.

Suppose that the first $2j$ super-diagonals of $E$ are zero for some $j$, and let us denote by $P$ the block matrix with the following properties:
\begin {itemize}
\item The sizes of the blocks are the same as the sizes of the corresponding blocks of the matrices $E$ and $F$.
\item The diagonal blocks are identity matrices.
\item The $(2j+1)$-st super-diagonal equals $(E_{1,2j+2},-E_{2,2j+3},E_{3,2j+4},\ldots)$.
\item Other blocks are zero.
\end {itemize}
Since $E$, $F$ and $P$ are upper-triangular, so are $PEP^{-1}$ and $PFP^{-1}.$ By a direct calculation we can see that the diagonal of the matrix $PEP^{-1}$ is the same as the diagonal of $E$, that the first $2j$ super-diagonals of $PEP^{-1}$ are zero and that its $(2j+1)$-st super-diagonal is also zero.
If $k>2j+2$, then the idempotency of $PEP^{-1}$ implies that its $(2j+2)$-nd super-diagonal is zero as well. Therefore, we may assume from the start that the first $2j+2$ super-diagonals of $E$ are zero.

By repeating this procedure we get the required form of the matrices after finitely many steps.
\end {proof}

{\it Acknowledgments.} The authors would like to thank the referee for careful review, in particular for finding a mistake in an earlier version of Example \ref{9max} and for providing the reference \cite{GainesLaffeyShapiro}.

This work was supported in part by grant P1-0222 of Slovenian Research Agency.

\bigskip
		
\noindent
     Marko Kandi\'{c}, Klemen \v Sivic : \\
     Faculty of Mathematics and Physics \\
     University of Ljubljana \\
     Jadranska 19 \\
     1000 Ljubljana \\
     Slovenia \\[1mm]
     E-mails : marko.kandic@fmf.uni-lj.si, klemen.sivic@fmf.uni-lj.si

\end{document}